\newtheorem{dfn}{Definition}[section]
\newtheorem{prop}[dfn]{Proposition}
\newtheorem{lem}[dfn]{Lemma}
\theoremstyle{definition}
\newtheorem{rem}[dfn]{Remark}
\renewcommand{\subsection}{\@startsection
	{subsection}
	{2}%
	\z@
	{.5\linespacing\@plus.7\linespacing}
	{.5em}%
	{\normalfont\bfseries}}
\renewcommand{\subsubsection}{\@startsection
	{subsubsection}
	{2}%
	\z@
	{.5\linespacing\@plus.7\linespacing}
	{.5em}%
	{\normalfont\bfseries}}
\begin{document}
\title{DG quivers of smooth rational surfaces}

\author[A.~Bodzenta]{Agnieszka Bodzenta}
\address{Faculty of Mathematics, Informatics and Mechanics,
         University of Warsaw,
	 Banacha 2,
	 02-097 Warsaw, Poland}
\email{a.bodzenta@mimuw.edu.pl}

\keywords{Derived categories, Exceptional collections, DG category, toric varieties}

\begin{abstract}
Let $X$ be a smooth rational surface. We calculate a DG quiver of a full exceptional collection of line bundles on $X$ obtained by an augmentation from a strong exceptional collection on the minimal model of $X$. In particular, we calculate canonical DG algebras of smooth toric surfaces.
\end{abstract}

\let\thefootnote\relax\footnote{The author is supported by a Polish MNiSW grant (contract number N N201 420639). }

\maketitle 

\section*{Introduction}

Derived categories of coherent sheaves have become one of the main research areas in modern algebraic geometry. An important tool allowing to work with such complicated categories is given by full exceptional collections. Let $X$ be a smooth projective variety and let $D^b(X)$ denote the bounded derived category of coherent sheaves on $X$. It is proved in \cite{bib_B} that a full strong exceptional collection $\sigma$ leads to an equivalence between $D^b(X)$ and the bounded derived category of modules over a finite quiver with relations. By a result of Bondal and Kapranov, see \cite{bib_BK}, if $\sigma$ is not strong then $D^b(X)$ is equivalent to the derived category of modules over some DG category $\mathcal{C}_\sigma$. It is proved in \cite{bib_BS} that in the latter case the DG category $\mathcal{C}_\sigma$ is a path algebra of a finite DG quiver with relations $Q_\sigma$.

Calculating the quiver of a strong exceptional collection is equivalent to understanding endomorphisms of some sheaf. On the other hand, in order to calculate the DG quiver a priori one has to use injective resolutions. In \cite{bib_BS} more comprehensive methods are given for determining DG quivers for two types of exceptional collections. Firstly, if a collection $\sigma$ can be mutated to a strong one $\tau$ then the DG quiver $Q_\sigma$ of $\sigma$ can be calculated by means of the quiver of $\tau$. On the other hand, if a collection $\sigma = \left< \mathcal{E}_1, \ldots, \mathcal{E}_n \right>$ is almost strong, i.e. $\textrm{Ext}^i(\mathcal{E}_j, \mathcal{E}_k) = 0$ for $i \neq 0, 1$, then one can construct a tilting object $\mathcal{E}_\sigma$ using universal extensions and coextensions defined in \cite{bib_HP2}. In this case endomorphisms of $\mathcal{E}_\sigma$ allow to calculate the DG quiver of $\sigma$.

Many examples of almost strong exceptional collections are given by exceptional collections of line bundles on rational surfaces. Recall, that every rational surface $X$, not isomorphic to the projective plane $\mathbb{P}^2$, is obtained from some Hirzebruch surface $\mathbb{F}_a$ by a sequence of blow-ups:
$$
X = X_n\, \xrightarrow{\pi_n}\, X_{n-1} \rightarrow \ldots \rightarrow \, X_1 \, \xrightarrow{\pi_1} \, X_0 = \mathbb{F}_a.
$$ 
In \cite{bib_HP1} Hille and Perling describe an augmentation process that allows to construct full exceptional collections of line bundles on $X$ starting from a full exceptional collection on $X_0$. Moreover, in \cite{bib_HP2} it is proved that collections obtained by augmentation are almost strong.

The main purpose of this note is to calculate the DG quiver of a full exceptional collection $\sigma$ on a smooth rational surface obtained via augmentation from a strong full exceptional collection on $X_0$. To do this we first present $\sigma$ in the canonical form (see Proposition \ref{prop_canonical_form}). Using this presentation we calculate the tilting object $\mathcal{E}_\sigma$ (see Proposition \ref{prop_tilt}), and its endomorphisms. Then using twisted complexes we can calculate the DG quiver of $\sigma$ and any of its mutations.

In Section \ref{sec_toric} we apply these methods to a smooth toric surface $Y$ with $T$-invariant divisors $D_1,\ldots, D_n$. Recall, that divisors $D_i$ correspond to the rays in the fan $\Sigma_Y \subset N \otimes_\mathbb{Z} \mathbb{Q}$ of $Y$. If the order of $D_i$ is induced by an orientation of $\mathbb{Q}^2$ then the collection 
$$
\left< \mathcal{O}_Y, \mathcal{O}_Y(D_1), \mathcal{O}_Y(D_1+D_2), \ldots, \mathcal{O}_Y(D_1+\ldots+D_{n-1})\right>
$$
is full and exceptional on $Y$. For any $k \in \{ 1,\ldots, n\}$ the same remains true for the collection 
$$
\left< \mathcal{O}_Y, \mathcal{O}_Y(D_k), \mathcal{O}_Y(D_k+D_{k+1}), \ldots, \mathcal{O}_Y(D_k+ \ldots + D_{k+n-2}) \right>
$$
if the indices are considered as elements of $\mathbb{Z}/n \mathbb{Z}$. Therefore we can consider all collections of such a form at once. Namely, let $Z = \textrm{Tot}\, \omega_Y$ be the total space of the canonical bundle on $Y$ and let $p\colon Z \rightarrow Y$ denote the canonical projection. As vector bundle $\mathcal{E} = \mathcal{O}_Y \oplus \mathcal{O}_Y(D_1) \oplus \ldots \oplus \mathcal{O}_Y(D_1+\ldots+D_{n-1})$ is a generator of $D^b(Y)$, we know that $p^*(\mathcal{E})$ is a generator of $D^b(Z)$. Moreover,
\begin{align*}
&\textrm{Hom}_Z(p^* (\mathcal{E}), p^*(\mathcal{E})) = \textrm{Hom}_Y(\mathcal{E}, p_* p^*(\mathcal{E})) = \\
&=\textrm{Hom}_Y(\mathcal{E}, \mathcal{E} \otimes p_*(\mathcal{O}_Z)) = \bigoplus_{n\geq 0} \textrm{Hom}_Y(\mathcal{E}, \mathcal{E} \otimes \mathcal{O}_Y(-n K_Y)).
\end{align*}

On $Y$ we can consider an infinite sequence $(A_k)_{k=0}^{\infty}$ of line bundles defined by
$$
A_{k} = \mathcal{O}_Y(s K_Y + D_1 + \ldots+D_r),
$$
where $k = sn + r$ for $0\leq r < n$. Denote by $\mathcal{A}_Y = \oplus A_k$ the sum of all elements in this sequence. By the canonical DG algebra of $Y$ we understand the DG algebra of endomorphisms of $p^*(\mathcal{E})$ or equivalently of $\mathcal{A}_Y$. Methods described in Section \ref{sec_DGquivers} allow us to calculate the canonical DG algebra of any smooth toric surface.

We give examples of canonical DG algebras of smooth toric surfaces. However, we do not investigate the connection between the combinatorial data of the fan $\Sigma_Y$ and the canonical DG algebra of $Y$.

The structure of the paper is as follows. Section \ref{sec_background} contains definitions of quivers and DG quivers, twisted complexes and exceptional collections together with mutations, universal extensions and coextensions. We also recall basic facts about rational surfaces. In Section \ref{sec_DGquivers} we recall after \cite{bib_HP1} the construction of full exceptional collections on smooth rational surfaces. We present any such exceptional collection in the canonical form and describe its Ext-quiver. Then, using universal coextensions, we calculate the associated tilting object and we describe its endomorphisms. This data allows us to calculate the DG quiver of the collection. In Section \ref{sec_toric} we apply these methods to smooth toric surfaces. We start by recalling basic facts about toric surfaces and full exceptional collections on them. Then we define the canonical DG algebra of a toric surface and we show how to use the results of Section \ref{sec_DGquivers} to calculate it. We conclude with examples of the canonical DG algebras for the 1-st and 2-nd Hirzebruch surfaces and for surfaces obtained from the 1-st Hirzebruch surface by blowing-up one point.  

\section{Background}\label{sec_background}

\subsection{Quivers}

A quiver $Q$ consists of two finite sets $Q_0$, $Q_1$ and two maps \mbox{$h, t: Q_1 \rightarrow Q_0$.} Elements of $Q_0$ are vertices of $Q$ and elements of $Q_1$ are arrows of $Q$. The maps $h$ and $t$ indicate the head and the tail of an arrow respectively. A path in $Q$ is a sequence $p = a_n \ldots a_1$ of arrows such that $h(a_i) = t(a_{i+1})$ for $1 \leq i \leq n-1$; we put $h(p) = h(a_n)$ and $t(p) = t(a_1)$. A path algebra $\mathbb{C}\,Q$ of a quiver $Q$ is an algebra with basis consisting of paths in $Q$; the product $p \circ p'$ of two basis elements is defined by means of concatenation of paths if $t(p) = h(p')$ and is put to be zero otherwise. We also assume that for any vertex $i \in Q_0$ there is a trivial path $e_i \in Q_1$ with $t(e_i)= i = h(e_i)$. Then, the element $\sum_{i\in Q_0} e_i $ is the unit of $\mathbb{C}\, Q$.

A quiver with relations $(Q, S)$ is a quiver $Q$ together with a set $S \subset \mathbb{C}\, Q$. Let $I = \left< S\right> \subset \mathbb{C}\,Q$ be an ideal generated by $S$. Then the path algebra $\mathbb{C}\,(Q,S)$ of a quiver with relations is defined to be $\mathbb{C}\,Q / I$. 

If arrows in $Q$ are $\mathbb{Z}$ - graded in such a way that $\textrm{deg}(e_i) = 0$ for any $i \in Q_0$ the path algebra $\mathbb{C}\,Q$ becomes a graded algebra; for a path $p = a_n\ldots a_1$ we put $\textrm{deg}(p) = \textrm{deg}(a_1) +\ldots+\textrm{deg}(a_n)$. 

A DG quiver is a quiver $Q$ together with a $\mathbb{Z}$ grading on $Q_1$ and a structure of a DG algebra on $\mathbb{C} \,Q$ such that $\partial(e_i) = 0$ for any $i \in Q_0$. The Leibniz rule guarantees that $h(\partial(p)) = h(p) $ and $t(\partial(p)) = t(p)$ if only $\partial(p) \neq 0 $. If the set $S \subset \mathbb{C}\, Q$ consists of homogeneous elements one can analogously define a DG quiver with relations $(Q,S)$.

\subsection{Twisted complexes}

Recall that a DG category is a preadditive category $\mathcal{C}$ in which abelian groups $\textrm{Hom}_{\mathcal{C}}(A,B)$ are endowed with a $\mathbb{Z}$-grading and a differential $\partial$ of degree one. Moreover, the composition of morphisms 
$$
\textrm{Hom}_{\mathcal{C}}(A,B)\otimes \textrm{Hom}_{\mathcal{C}}(B,C)\to \textrm{Hom}_{\mathcal{C}}(A,C)
$$
is a morphism of complexes and for any object $C\in\mathcal{C}$ the identity morphism $\textrm{id}_C$ is a closed morphism of degree zero.

For a DG category $\mathcal{C}$ Bondal and Kapranov in \cite{bib_BK} define the category $\mathcal{C}^{\textrm{pre-tr}}$ of twisted complexes. To do this first one has to introduce the category $\widehat{\mathcal{C}}$ of formal shifts. The objects of $\widehat{\mathcal{C}}$ are of the form $C[n]$ where $C\in \mathcal{C}$ and $n\in\mathbb{N}$. For elements $C_1[k]$ and $C_2[n]$ of $\widehat{\mathcal{C}}$ we put $\textrm{Hom}^l_{\widehat{\mathcal{C}}}
(C_1[k], C_2[n]) = \textrm{Hom}^{l+n-k}_\mathcal{C}(C_1, C_2)$ and $\partial_{\widehat{\mathcal{C}}}(f) = (-1)^n \partial_{\mathcal{C}}(f)$ for $f \in \textrm{Hom}_{\widehat{\mathcal{C}}}(C_1[k], C_2[n])$.

A one-sided twisted complex over $\mathcal{C}$ is an expression $(\bigoplus_{i=1}^n C_i[r_i], q_{i,j})$ where $C_i$'s are objects of $\mathcal{C}$, $r_i\in \mathbb{Z}$, $n\geq 0$ and $q_{i,j}\in \textrm{Hom}_{\widehat{\mathcal{C}}}^1(C_i[r_i], C_j[r_j])$ are such that $q_{i,j}=0$ for $i\geq j$ and $\partial q + q^2 = 0$.

Let $C=(\bigoplus C_i[r_i], q)$ and  $C'=(\bigoplus C'_j[r'_j], q')$ be twisted complexes. Morphisms in the category  $\mathcal{C}^\textrm{pre-tr}$ from $C$ to $C'$ are given the set of matrices $f=(f_{i,j})$ for $f_{i,j}\in \textrm{Hom}_{\widehat{\mathcal{C}}}( C_i[r_i], C'_j[r'_j])$. A map $f=(f_{i,j})$ is homogeneous of degree $k$ if \mbox{$f_{i,j}\in \textrm{Hom}^k_{\widehat{\mathcal{C}}}( C_i[r_i], C'_j[r'_j])$} for all pairs $i,j$.

\subsection{Exceptional collections}

Let $X$ be a smooth projective variety defined over $\mathbb{C}$. Recall, that an object $\mathcal{E} \in D^b(X)$ is \emph{exceptional} if $\textrm{Hom}(\mathcal{E},\mathcal{E})= \mathbb{C}$ and $\textrm{Ext}^i(\mathcal{E}, \mathcal{E}) = 0 $ for $i \neq 0$. A sequence $\sigma = \left< \mathcal{E}_1, \ldots,\mathcal{E}_n \right> $ of exceptional sheaves is an \emph{exceptional collection} if $\textrm{Ext}^i(\mathcal{E}_j, \mathcal{E}_k) = 0 $ for $j> k$ and any $i$. An exceptional collection $\sigma$ is \emph{full} if the smallest strictly full subcategory of $D^b(X)$ containing $\mathcal{E}_1, \ldots, \mathcal{E}_n$ equals $D^b(X)$. Finally, the collection $\sigma$ is \emph{strong} if $\textrm{Ext}^i(\mathcal{E}_j, \mathcal{E}_k) =0$ for $i \neq 0 $ and any $j$, $k$.

In \cite{bib_B} it is proved that a full strong exceptional collection $\sigma$ leads to an equivalence of $D^b(X)$ with $D^b(\mathop{\textrm{mod-} {A_\sigma}})$ for a finite dimensional algebra $A_\sigma$. The algebra $\mathcal{A}_\sigma$ is a path algebra of a quiver with relations obtained from sheaves $\mathcal{E}_1, \ldots, \mathcal{E}_n$. 

It is proved in \cite{bib_BK} that when the collection $\sigma$ is not strong the category $D^b(X)$ is equivalent to $D^b(C_\sigma)$ for some DG algebra $C_\sigma$. It was proved in \cite{bib_BS} that $C_\sigma$ can be chosen to be a path algebra of a finite DG quiver with relations $Q_\sigma$. If the collection $\sigma$ is strong the DG algebra $C_\sigma$ is quasi-isomorphic to $A_\sigma$.

In \cite{bib_B} Bondal defines mutations of exceptional collections on a variety $X$. If a pair $\left< \mathcal{E}, \mathcal{F} \right>$ is exceptional then so are the pairs $\left< L_{\mathcal{E}} \mathcal{F}, \mathcal{E}\right>$ and $\left< \mathcal{F}, R_{\mathcal{F}} \mathcal{E}  \right>$ for $L_{\mathcal{E}} \mathcal{F}$ and $R_{\mathcal{F}} \mathcal{E}$ defined by distinguished triangles in $D^b(X)$
\begin{align*}
&L_\mathcal{E} \mathcal{F} \rightarrow \mathcal{E} \otimes \textrm{Hom}(\mathcal{E},\mathcal{F}) \rightarrow \mathcal{F} \rightarrow L_\mathcal{E} \mathcal{F}[1]&\\
&R_\mathcal{F} \mathcal{E}[-1] \rightarrow \mathcal{E} \rightarrow \textrm{Hom}(\mathcal{E},\mathcal{F})^* \otimes \mathcal{F} \rightarrow R_\mathcal{F} \mathcal{E}.&
\end{align*}

For an exceptional collection $\sigma = \langle \mathcal{E}_1,\ldots,\mathcal{E}_n\rangle$ the $i$-th left \emph{mutation} $L_i\sigma$ and the $i$-th right mutation $R_i\sigma$ are exceptional collections defined by 
\begin{align*}
& L_i\sigma = \langle \mathcal{E}_1, \ldots, \mathcal{E}_{i-1}, L_{\mathcal{E}_i}\mathcal{E}_{i+1}, \mathcal{E}_i, \mathcal{E}_{i+2},\ldots, \mathcal{E}_n  \rangle, &\\
& R_i\sigma = \langle \mathcal{E}_1,\ldots, \mathcal{E}_{i-1}, \mathcal{E}_{i+1}, R_{\mathcal{E}_{i+1}}\mathcal{E}_i, \mathcal{E}_{i+2},\ldots, \mathcal{E}_n \rangle.&
\end{align*}

As described in \cite{bib_BS} twisted complexes allow to define mutations of DG quivers in such a way that $Q_{L_i \sigma}  = L_i Q_\sigma$ and $Q_{R_i \sigma} = R_i Q_\sigma$. In particular, it is relatively easy to calculate a DG quiver of a collections that can be mutated to a strong one.

\subsection{Universal extensions and coextensions}

Let $\sigma = \left< \mathcal{E}_1, \ldots, \mathcal{E}_n \right>$ be an exceptional collection. We shall say that $\sigma$ is \emph{almost strong} if \mbox{$\textrm{Ext}^i(\mathcal{E}_j, \mathcal{E}_k) = 0$} for $i \neq 0, \, 1$ and for all $j$, $k$. 

In \cite{bib_HP2} Hille and Perling describe how to construct a tilting object from an almost strong full exceptional collection. The main tool in their construction is universal extension and coextension. For a pair $(\mathcal{E}, \mathcal{F})$ a universal extension $\bar{\mathcal{E}}$ of $\mathcal{E}$ by $\mathcal{F}$ is defined by means of a distinguished triangle
$$
\mathcal{E}[-1] \xrightarrow{can} \mathcal{F} \otimes \textrm{Ext}^1(\mathcal{E}, \mathcal{F})^* \rightarrow \bar{\mathcal{E}} \rightarrow \mathcal{E}.
$$

Dually, a universal coextension of $\mathcal{F}$ by $\mathcal{E}$ is an object $\bar{\mathcal{F}}$ defined by a distinguished triangle
$$
\mathcal{F} \rightarrow \bar{\mathcal{F}} \rightarrow \mathcal{E} \otimes \textrm{Ext}^1(\mathcal{E}, \mathcal{F}) \xrightarrow{\textrm{can}} \mathcal{F}.
$$

If $\mathcal{F}$ is exceptional and $\textrm{Ext}^i(\mathcal{F}, \mathcal{E})= 0$ for all $i$ then $\textrm{Ext}^1(\mathcal{E}, \mathcal{F})^*$ is naturally isomorphic to $\textrm{Hom}(\mathcal{F}, \bar{\mathcal{E}})$ and thus $\mathcal{E}$ is the cone of the canonical map
$$
\mathcal{F}\otimes \textrm{Hom}(\mathcal{F}, \bar{\mathcal{E}}) \xrightarrow{can} \bar{\mathcal{E}} \rightarrow \mathcal{E}.
$$

Dually, if $\mathcal{E}$ is exceptional and $\textrm{Ext}^i(\mathcal{F}, \mathcal{E})= 0$ for all $i$ then $\textrm{Ext}^1(\mathcal{E}, \mathcal{F})$ is naturally isomorphic to $\textrm{Hom}(\bar{\mathcal{F}}, \mathcal{E})$ and up to a shift $\mathcal{F}$ is the cone of the canonical map
$$
\bar{\mathcal{F}} \xrightarrow{\textrm{can}} \mathcal{E} \otimes \textrm{Hom}(\bar{\mathcal{F}}, \mathcal{E}) \rightarrow \mathcal{F}[1].
$$

These observations are used in \cite{bib_BS} to calculate a DG quiver of any almost strong exceptional collection. Again, twisted complexes play an important part in the calculations. 

\subsection{Rational surfaces}

Let $X$ be a smooth rational surface. Then $X$ is obtained by a sequence of blow-ups from the projective plane $\mathbb{P}^2$ or a Hirzebruch surface $\mathbb{F}_a$. We have a sequence of maps 
\[
\xymatrix{X = X_n \ar[r]^{\pi_{n}} & X_{n-1} \ar[r]^{\pi_{n-1}} & \ldots \ar[r]&  X_1 
\ar[r]^{\pi_1} & X_0,} 
\]
where  $X_0 = \mathbb{P}^2$ or $\mathbb{F}_a$. We can also assume that every $\pi_i$ is a blow up of one point $x_{i-1}\in X_{i-1}$.

Let $E_i \subset X_i$ be the exceptional divisor of $\pi_i$. Denote by $E_i$ the strict transform of $E_i$ in $X$ and by $R_i\subset X$ its pullback under $\pi_{i+1} \ldots \pi_n$.

The divisors $R_i$ are mutually orthogonal and $R_i^2 = -1$. Hille and Perling in \cite{bib_HP1} introduce a partial order on the set of indices $\{1,\ldots, n\}$; $i \succeq j$ if $i>j$ and $\pi_{j-1} \ldots \pi_{i-1}(x_{i-1}) = x_{j-1}$. Then 
$$
\textrm{Hom}(\mathcal{O}_X(R_i),\mathcal{O}_X(R_j))= \textrm{Ext}^1(\mathcal{O}_X(R_i), \mathcal{O}_X(R_j)) = \left\{\begin{array}{cl}\mathbb{C} & \textrm{if } i \succeq j , \\0, & \textrm{otherwise.} \end{array} \right.
$$ 
Moreover, $H^0(\mathcal{O}_X(R_i)) = \mathbb{C}$, $H^j(\mathcal{O}_X(R_i)) = 0$ for $j>0$ and $H^k(\mathcal{O}_X(-R_i)) = 0$ for all $k$.

\section{DG quivers of exceptional collections on rational surfaces}\label{sec_DGquivers}

\subsection{Exceptional collections on rational surfaces}

Again, let $X$ be a smooth rational surface. We recall the augmentation procedure given in \cite{bib_HP1} by Hille and Perling which allows to construct full exceptional collections of line bundles on $X$ from an exceptional collection on $X_0$. To simplify the notation we identify a line bundle $\mathcal{L}$ on $X_i$ with its pull back via $\pi_j$'s and denote them by the same letter.

Let $\sigma=\langle\mathcal{L}_1,\ldots,\mathcal{L}_s\rangle$ be a full exceptional collection of line bundles on $X_i$. The augmentation of $\sigma$ is \mbox{$\sigma '=\langle \mathcal{L}_1(R_{i+1}), \ldots, \mathcal{L}_{k-1}(R_{i+1}), \mathcal{L}_k,$ $  \mathcal{L}_k(R_{i+1}), \mathcal{L}_{k+1}, \ldots \mathcal{L}_s \rangle$} -- an exceptional collection on $X_{i+1}$. 

It follows from a result of Orlov, \cite{bib_O} that collections obtained via augmentation are full. It is proved in \cite{bib_HP2} that they are almost strong.

Mutations allow to present each of the above described collections in the following form.
 
\begin{prop}\label{prop_canonical_form}
Any exceptional collection of line bundles on $X$ obtained via augmentation can be mutated to $\langle \mathcal{O}_{R_n}(R_n)[-1], \ldots, \mathcal{O}_{R_1}(R_1)[-1],\mathcal{O}_X, \mathcal{N}_1, \ldots, \mathcal{N}_t \rangle $, where $\langle \mathcal{O}_{X_0} = \mathcal{N}_0, \mathcal{N}_1, \ldots, \mathcal{N}_t \rangle$  is an exceptional collection on $X_0$.
\end{prop}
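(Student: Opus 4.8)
The plan is to argue by induction on the number $n$ of blow-ups, building the canonical form one level at a time. For the base case $n=0$ the collection already lives on $X_0$; after twisting the whole collection by $\mathcal{L}_1^{-1}$ (an autoequivalence preserving exceptionality and fullness) one may assume its first member is $\mathcal{O}_{X_0}$, and nothing further is needed since no divisors $R_j$ have appeared. For the inductive step I would write $X=X_n\xrightarrow{\pi_n}X_{n-1}$ with exceptional curve $R=R_n$, and let $\sigma$ be the augmentation, at some position $k$, of a collection $\bar\sigma=\langle\mathcal{L}_1,\ldots,\mathcal{L}_s\rangle$ on $X_{n-1}$ that is itself obtained by augmentation. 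The target is to mutate $\sigma$ to $\langle\mathcal{O}_{R_n}(R_n)[-1],\mathcal{L}_1,\ldots,\mathcal{L}_s\rangle$, after which the inductive hypothesis is applied to the right-hand block.

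The computational core is a single mutation. Since each $\mathcal{L}_k$ is pulled back from $X_{n-1}$ its restriction to $R$ is trivial, so $\mathcal{L}_k(R)|_R\cong\mathcal{O}_R(R)$ and there is a short exact sequence $0\to\mathcal{L}_k\to\mathcal{L}_k(R)\to\mathcal{O}_R(R)\to0$. Together with $H^{>0}(\mathcal{O}_X(R))=0$ this identifies $L_{\mathcal{L}_k}\mathcal{L}_k(R)=\mathcal{O}_R(R)[-1]$, so $\sigma$ mutates to $\langle\mathcal{L}_1(R),\ldots,\mathcal{L}_{k-1}(R),\mathcal{O}_R(R)[-1],\mathcal{L}_k,\ldots,\mathcal{L}_s\rangle$. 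Two orthogonality facts then govern how $\mathcal{O}_R(R)[-1]$ behaves: because $\mathcal{O}_R(R)\cong\mathcal{O}_{\mathbb{P}^1}(-1)$ has no cohomology, $\mathrm{RHom}(\mathcal{L},\mathcal{O}_R(R)[-1])=0$ for every line bundle $\mathcal{L}$ pulled back from $X_{n-1}$; and the mutual orthogonality $R_i\cdot R_j=0$ makes $\mathcal{O}_{R_n}(R_n)[-1]$ completely orthogonal to each $\mathcal{O}_{R_j}(R_j)[-1]$ with $j<n$. Consequently $\mathcal{O}_R(R)[-1]$ can be transposed freely to the left past any untwisted pulled-back bundle and past the other exceptional objects of the block.

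Granting that $\sigma$ has been mutated to $\langle\mathcal{O}_{R_n}(R_n)[-1],\mathcal{L}_1,\ldots,\mathcal{L}_s\rangle$, the induction closes cleanly. The functor $\pi_n^*$ is fully faithful and $\langle\mathcal{L}_1,\ldots,\mathcal{L}_s\rangle$ generates $\pi_n^* D^b(X_{n-1})$, on which $\mathrm{RHom}(-,\mathcal{O}_{R_n}(R_n)[-1])$ vanishes; hence every mutation of $\bar\sigma$ furnished by the inductive hypothesis is performed inside this block without disturbing the leading object. This rewrites the block as $\langle\mathcal{O}_{R_{n-1}}(R_{n-1})[-1],\ldots,\mathcal{O}_{R_1}(R_1)[-1],\mathcal{O}_{X_{n-1}},\mathcal{N}_1,\ldots,\mathcal{N}_t\rangle$, and pulling these objects back to $X$ (using that the partial order $\succeq$ controls the incidence of $R_j$ with the center of $\pi_n$) produces the desired canonical form on $X$.

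The main obstacle is the step I glossed over: moving $\mathcal{O}_{R_n}(R_n)[-1]$ from its birth position $k$ to the front, past the twisted bundles $\mathcal{L}_1(R),\ldots,\mathcal{L}_{k-1}(R)$. Here $\mathcal{O}_R(R)[-1]$ is \emph{not} orthogonal to $\mathcal{L}_j(R)$ — one computes $\mathrm{RHom}(\mathcal{L}_j(R),\mathcal{O}_R(R)[-1])=\mathbb{C}[-1]$ — so the free transposition is unavailable, and each individual mutation simultaneously untwists a bundle and threatens to introduce a shift. The cleanest route I see is to reduce to the case $k=1$ of augmentation at the first position, where no twisted bundles occur and the pair mutation alone yields $\langle\mathcal{O}_R(R)[-1],\mathcal{L}_1,\ldots,\mathcal{L}_s\rangle$; the problem then becomes showing that augmentations at different positions are mutation-equivalent. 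Verifying this equivalence, and checking that the intervening mutations (encoded through twisted complexes as in the earlier sections) leave no spurious shifts on the objects $\mathcal{N}_j$, is the technical heart of the argument and the place where the explicit combinatorial structure of the augmentation must be exploited.
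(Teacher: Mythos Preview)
Your overall structure---induction on $n$, the initial pair mutation $L_{\mathcal{L}_k}\mathcal{L}_k(R_n)=\mathcal{O}_{R_n}(R_n)[-1]$ via the exact sequence $0\to\mathcal{L}_k\to\mathcal{L}_k(R_n)\to\mathcal{O}_{R_n}(R_n)\to 0$, and then induction on the right-hand block---matches the paper exactly. The gap is in the step you single out as the ``main obstacle,'' and the issue is not that it is hard but that you have already solved it without noticing.

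You correctly compute $\mathrm{RHom}(\mathcal{L}_j(R_n),\mathcal{O}_{R_n}(R_n)[-1])=\mathbb{C}[-1]$, and you observe that each such mutation ``simultaneously untwists a bundle and threatens to introduce a shift.'' That untwisting \emph{is} the mechanism, not a side effect. The same exact sequence $0\to\mathcal{L}_j\to\mathcal{L}_j(R_n)\to\mathcal{O}_{R_n}(R_n)\to 0$, now read with $\mathcal{L}_j(R_n)$ on the left and $\mathcal{O}_{R_n}(R_n)$ on the right, identifies the right mutation of $\mathcal{L}_j(R_n)$ past $\mathcal{O}_{R_n}(R_n)[-1]$ with $\mathcal{L}_j$ (up to a uniform shift). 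Iterating for $j=i-1,i-2,\ldots,1$ simultaneously moves $\mathcal{O}_{R_n}(R_n)[-1]$ to the front and replaces every $\mathcal{L}_j(R_n)$ by $\mathcal{L}_j$, landing directly on $\langle\mathcal{O}_{R_n}(R_n)[-1],\mathcal{L}_1,\ldots,\mathcal{L}_s\rangle$. The shift you worry about is uniform across the untwisted block and irrelevant for the inductive hypothesis, which only uses that $\langle\mathcal{L}_1,\ldots,\mathcal{L}_s\rangle$ is itself an augmented collection pulled back from $X_{n-1}$; mutations commute with shifts, so no ``spurious shifts on the $\mathcal{N}_j$'' arise.

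Consequently your proposed detour---reducing to augmentation at position $k=1$ and proving that augmentations at different positions are mutation-equivalent---is unnecessary. The paper simply performs the $i-1$ further mutations directly, using exactly the same short exact sequence you already invoked for the first one; the remark about orthogonality to the $\mathcal{O}_{R_j}(R_j)$ with $j<n$ is also not needed at this stage, since those objects have not yet appeared when you move $\mathcal{O}_{R_n}(R_n)[-1]$ to the front.
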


\begin{proof}

The collection on $X$ obtained via augmentation is of the form 
$$
\langle \mathcal{L}_1(R_n),\ldots,\mathcal{L}_{i-1}(R_n),\mathcal{L}_i, \mathcal{L}_i(R_n), \mathcal{L}_{i+1}, \ldots \mathcal{L}_s \rangle,
$$
where $\mathcal{L}_j$'s are pull backs of line bundles on $X_{n-1}$.  

Equality
$$
 \textrm{Hom}(\mathcal{L}_i, \mathcal{L}_i(R_n)) = \textrm{Hom}(\mathcal{O}_X,\mathcal{O}_X(R_n)) = \mathbb{C}
$$
and the short exact sequence
$$
0\rightarrow \mathcal{L}_i \rightarrow \mathcal{L}_i(R_t) \rightarrow \mathcal{O}_{R_i}(R_i) \rightarrow 0
$$ 
show that this collection can be mutated to 
$$
\langle \mathcal{L}_1(R_n), \ldots, \mathcal{L}_{i-1}(R_n), \mathcal{O}_{R_n}(R_n)[-1], \mathcal{L}_i, \mathcal{L}_{i+1}, \ldots \mathcal{L}_s \rangle.
$$

Then 
$$
\textrm{Hom}(\mathcal{L}_i(R_n), \mathcal{O}_{R_n}(R_n)) = \textrm{Hom}(\mathcal{O}_X(R_n), \mathcal{O}_{R_n}(R_n)) = \textrm{Hom}(\mathcal{O}_X, \mathcal{O}_{R_n}) = \mathbb{C}
$$
and exact sequences
$$
0\rightarrow \mathcal{L}_i \rightarrow \mathcal{L}_i(R_n) \rightarrow \mathcal{O}_{R_n}(R_n) \rightarrow 0
$$
provide further mutations to $\langle \mathcal{O}_{R_n}(R_n)[-1], \mathcal{L}_1, \ldots, \mathcal{L}_s \rangle$. 

The collection $\langle \mathcal{L}_1,\ldots, \mathcal{L}_s \rangle$ is a pull back of a collection on $X_{n-1}$ and it again has the form $\langle \mathcal{L}'_1 (R_{n-1}), \ldots, \mathcal{L}'_{k-1}(R_{n-1}), \mathcal{L}'_k, \mathcal{L}'_k(R_{n-1}), \mathcal{L}'_{k+1}, \ldots, \mathcal{L}'_{s-1} \rangle$ for some $k$. As before, it can be mutated to $\langle \mathcal{L}'_1(R_{n-1}), \ldots, \mathcal{L}'_{k-1}(R_{n-1}),$ $ \mathcal{O}_{R_{n-1}}(R_{n-1})[-1],\mathcal{L}'_k, \ldots, \mathcal{L}'_{s-1}\rangle$ and then to $\langle \mathcal{O}_{R_{n-1}}(R_{n-1})[-1],\mathcal{L}'_1, \ldots, \mathcal{L}'_{s-1} \rangle$.

Continuing, we can mutate the collection on $X$ to $\langle \mathcal{O}_{R_n}(R_n)[-1], \ldots, \mathcal{O}_{R_1}(R_1)[-1],$ $ \mathcal{O}_X, \mathcal{N}_1, \ldots, \mathcal{N}_t \rangle$. 
\end{proof}

From now on we will assume that the collection $\left< \mathcal{O}_{X_0}, \mathcal{N}_1,\ldots, \mathcal{N}_t \right>$ on $X_0$ is strong.

\subsection{Ext-quiver of $\langle \mathcal{O}_{R_n}(R_n)[-1], \ldots, \mathcal{O}_{R_1}(R_1)[-1], \mathcal{O}_X, \mathcal{N}_1, \ldots, \mathcal{N}_t \rangle$.}

To draw the Ext-quiver of this collection in particular we need to understand the compositions 
\begin{align*}
\textrm{Ext}^1(\mathcal{O}_{R_j}(R_j), \mathcal{O}_{R_k}(R_k)) \otimes \textrm{Hom}(\mathcal{O}_{R_i}(R_i), \mathcal{O}_{R_j}(R_j)) &\rightarrow \textrm{Ext}^1(\mathcal{O}_{R_i}(R_i), \mathcal{O}_{R_k}(R_k)),\\
\textrm{Hom}(\mathcal{O}_{R_j}(R_j), \mathcal{O}_{R_k}(R_k)) \otimes \textrm{Ext}^1(\mathcal{O}_{R_i}(R_i), \mathcal{O}_{R_j}(R_j)) &\rightarrow \textrm{Ext}^1(\mathcal{O}_{R_i}(R_i), \mathcal{O}_{R_k}(R_k))
\end{align*}
for $i \succeq j \succeq k$. 

Denote by $\mathcal{C}$ the subcategory of $D^b(X)$ generated by objects $\mathcal{O}_{R_n}(R_n), \ldots, \mathcal{O}_{R_1}(R_1)$ and by $\mathcal{C}'$ the subcategory of $D^b(X)$ generated by $\mathcal{O}(R_n), \ldots, \mathcal{O}(R_1)$. Then $\mathcal{C}$ is a mutation of $\mathcal{C}'$ over $\mathcal{O}_X$ and hence understanding morphisms between generators of $\mathcal{C}$ is equivalent to understanding morphisms between generators of $\mathcal{C}'$.

\begin{lem}\label{lem_composition}
Let $i \succeq j \succeq k$. The composition 
$$
\textrm{Ext}^1(\mathcal{O}_X(R_j), \mathcal{O}_X(R_k)) \otimes \textrm{Hom}(\mathcal{O}_X(R_i), \mathcal{O}_X(R_j)) \rightarrow \textrm{Ext}^1(\mathcal{O}_X(R_i), \mathcal{O}_X(R_k))
$$
is an isomorphism.
\end{lem}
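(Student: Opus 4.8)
The plan is to twist all three groups into cohomology of line bundles, recognize the composition as multiplication by a global section, and then reduce the whole statement to a single cohomological vanishing on the zero-divisor of that section. Since $i \succeq j$ and $j \succeq k$, transitivity of $\succeq$ gives $i \succeq k$, so by the formula recalled for rational surfaces all three groups are $1$-dimensional. Tensoring the two $\textrm{Ext}$-groups by $\mathcal{O}_X(-R_j)$, resp.\ $\mathcal{O}_X(-R_i)$, and the $\textrm{Hom}$-group by $\mathcal{O}_X(-R_i)$, I would identify
$\textrm{Hom}(\mathcal{O}_X(R_i),\mathcal{O}_X(R_j)) = H^0(\mathcal{O}_X(R_j - R_i))$, together with
$\textrm{Ext}^1(\mathcal{O}_X(R_j),\mathcal{O}_X(R_k)) = H^1(\mathcal{O}_X(R_k - R_j))$ and
$\textrm{Ext}^1(\mathcal{O}_X(R_i),\mathcal{O}_X(R_k)) = H^1(\mathcal{O}_X(R_k - R_i))$. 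Under these identifications the Yoneda composition is the cup product, that is, multiplication by the generator $s \in H^0(\mathcal{O}_X(R_j-R_i))$ of the $\textrm{Hom}$-space. Thus the content of the lemma is that multiplication by $s$ is an isomorphism $H^1(\mathcal{O}_X(R_k-R_j)) \to H^1(\mathcal{O}_X(R_k-R_i))$.

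Let $D \in |R_j - R_i|$ be the zero divisor of $s$, so that multiplication by $s$ fits into the short exact sequence $0 \to \mathcal{O}_X(R_k-R_j) \xrightarrow{s} \mathcal{O}_X(R_k-R_i) \to \mathcal{O}_D(R_k-R_i) \to 0$. Because $X$ is integral and $s \neq 0$, multiplication by $s$ is injective on global sections; since $H^0(\mathcal{O}_X(R_k-R_j)) = \textrm{Hom}(\mathcal{O}_X(R_j),\mathcal{O}_X(R_k))$ and $H^0(\mathcal{O}_X(R_k-R_i)) = \textrm{Hom}(\mathcal{O}_X(R_i),\mathcal{O}_X(R_k))$ are both $1$-dimensional (as $j \succeq k$ and $i \succeq k$), this map is an isomorphism on $H^0$. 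Feeding this into the long exact sequence, the connecting map $H^0(\mathcal{O}_D(R_k-R_i)) \to H^1(\mathcal{O}_X(R_k-R_j))$ is injective, and multiplication by $s$ on $H^1$ is an isomorphism if and only if this connecting map vanishes, i.e.\ if and only if $H^0(\mathcal{O}_D(R_k-R_i)) = 0$; the $H^1$-target is then automatically surjected onto since both $H^1$-groups are $1$-dimensional. Hence the lemma is \emph{equivalent} to the vanishing $H^0(\mathcal{O}_D(R_k-R_i)) = 0$.

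For the vanishing I would first record the intersection numbers. Using $K_X = \pi^* K_{X_0} + \sum_l R_l$ and $R_aR_b = -\delta_{ab}$ one gets $D^2 = (R_j-R_i)^2 = -2$ and $K_X\cdot D = R_j^2 - R_i^2 = 0$, so $p_a(D) = 1 + \tfrac12(D^2 + K_X\cdot D) = 0$; combined with the blow-up structure this presents $D$ as a connected reduced tree of smooth rational curves, namely the strict transforms $E_l$ with $l \succeq j$ lying over $x_{j-1}$. From this I draw two conclusions. First, every component $E_l$ of $D$ has $l \succeq j$, hence $l > k$, so $R_k \cdot E_l = 0$; since $D$ is a tree of $\mathbb{P}^1$'s, a line bundle of total multidegree $0$ on it is trivial, whence $\mathcal{O}_X(R_k)|_D \cong \mathcal{O}_D$ and $H^0(\mathcal{O}_D(R_k-R_i)) = H^0(\mathcal{O}_D(-R_i))$. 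Second, twisting $0 \to \mathcal{O}_X(-D) \to \mathcal{O}_X \to \mathcal{O}_D \to 0$ by $\mathcal{O}_X(-R_i)$ and using $-R_i - D \equiv -R_j$ yields $0 \to \mathcal{O}_X(-R_j) \to \mathcal{O}_X(-R_i) \to \mathcal{O}_D(-R_i) \to 0$; as $H^0(\mathcal{O}_X(-R_i)) = 0$ and $H^1(\mathcal{O}_X(-R_j)) = 0$ by the vanishing recalled for rational surfaces, the long exact sequence forces $H^0(\mathcal{O}_D(-R_i)) = 0$, as required.

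The formal part — the cup-product identification, the long-exact-sequence reduction, and the two final short exact sequences — is routine. The genuinely geometric input, and the step I expect to be the main obstacle, is the precise description of $D$: that the unique effective divisor in $|R_j-R_i|$ is a \emph{reduced, connected tree of smooth rational curves} supported over $x_{j-1}$, each component of which is orthogonal to $R_k$. The intersection-theoretic computation already yields $p_a(D)=0$, but establishing reducedness, connectedness, and the identification of the components as the strict transforms $E_l$ with $l\succeq j$ is exactly where the combinatorics of infinitely near points and the Hille--Perling order $\succeq$ must be invoked.
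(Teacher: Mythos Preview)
Your reduction to the vanishing $H^0(\mathcal{O}_D(R_k-R_i))=0$, with $D$ the unique effective divisor in $|R_j-R_i|$, is correct and is essentially the same first move as in the paper, which phrases the target vanishing dually as $\textrm{Ext}^1(\mathcal{O}_{R_j-R_i}(R_j),\mathcal{O}_X(R_k))=0$. The real problem is precisely the step you flag as the main obstacle: your claimed description of $D$ as a \emph{reduced} tree of smooth rational curves is false in general. For a concrete counterexample, blow up successively with $x_1\in E_1$, $x_2\in E_2\setminus E_1$, $x_3=E_2\cap E_3$, and $x_4\in E_4\setminus(E_2\cup E_3)$; then $5\succeq 2\succeq 1$, but $R_2=E_2+E_3+2E_4+2E_5$ and hence $R_2-R_5=E_2+E_3+2E_4+E_5$ carries $E_4$ with multiplicity two. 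Your computation $p_a(D)=0$ is correct but does not exclude this, so the ``multidegree-zero line bundle on a tree of $\mathbb{P}^1$'s is trivial'' step does not apply as stated.

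Fortunately the conclusion you want from that description, $\mathcal{O}_X(R_k)|_D\cong\mathcal{O}_D$, holds for a much cheaper reason, with no hypothesis on the scheme structure of $D$: the line bundle $\mathcal{O}_X(R_k)$ is pulled back from $X_k$, while $\textrm{Supp}(D)\subset\textrm{Supp}(R_j)$ is contracted to a single point of $X_k$ (since $j>k$), so any local trivialisation of $\mathcal{O}_{X_k}(E_k)$ near that point pulls back to a trivialisation of $\mathcal{O}_X(R_k)$ on a neighbourhood of $D$. With this fix, your second short exact sequence and the vanishings $H^0(\mathcal{O}_X(-R_i))=H^1(\mathcal{O}_X(-R_j))=0$ complete the proof. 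For comparison, the paper avoids any discussion of $D$ at this stage: it instead applies $\textrm{Ext}^\bullet(-,\mathcal{O}_X(R_k))$ to the auxiliary sequence $0\to\mathcal{O}_{R_j-R_i}(R_j)\to\mathcal{O}_{R_j}(R_j+R_i)\to\mathcal{O}_{R_i}(R_i)\to0$, shows that the two right-hand terms each contribute a one-dimensional $\textrm{Ext}^1$, and deduces the required vanishing by a dimension count.
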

\begin{proof}
The exact sequence
$$
0 \rightarrow \mathcal{O}_X(R_i) \rightarrow \mathcal{O}_X(R_j) \rightarrow \mathcal{O}_{R_j-R_i}(R_j) \rightarrow 0 
$$
gives
\begin{align*}
0 \rightarrow &\textrm{Hom}(\mathcal{O}_{R_j-R_i}(R_j), \mathcal{O}_X(R_k)) \rightarrow \textrm{Hom}(\mathcal{O}_X(R_j), \mathcal{O}_X(R_k)) \xrightarrow{\alpha} \textrm{Hom}(\mathcal{O}_X(R_i), \mathcal{O}_X(R_k))\rightarrow \\
\rightarrow &\textrm{Ext}^1(\mathcal{O}_{R_j-R_i}(R_j), \mathcal{O}_X(R_k)) \rightarrow \textrm{Ext}^1(\mathcal{O}_X(R_j), \mathcal{O}_X(R_k)) \xrightarrow{\beta} \textrm{Ext}^1(\mathcal{O}_X(R_i), \mathcal{O}_X(R_k))\rightarrow\\
\rightarrow &\textrm{Ext}^2(\mathcal{O}_{R_j-R_i}(R_j), \mathcal{O}_X(R_k)) \rightarrow 0.
\end{align*}
The morphism $\alpha \colon \textrm{Hom}(\mathcal{O}_X(R_j), \mathcal{O}_X(R_k)) \rightarrow \textrm{Hom}(\mathcal{O}_X(R_i), \mathcal{O}_X(R_k))$ is an isomorphism because its kernel is zero and both spaces are one dimensional.

$\beta$ is an isomorphism if and only if $\textrm{Ext}^1(\mathcal{O}_{R_j-R_i}(R_j), \mathcal{O}_X(R_k))$ is zero.

We have a short exact sequence
$$
0 \rightarrow \mathcal{O}_{R_j-R_i}(R_j) \rightarrow \mathcal{O}_{R_j}(R_j+R_i) \rightarrow \mathcal{O}_{R_i}(R_j + R_i)\simeq \mathcal{O}_{R_i}(R_i) \rightarrow 0.
$$
It is easy to check that $\textrm{Ext}^1(\mathcal{O}_{R_i}(R_i), \mathcal{O}_{R_k}(R_k)) = \mathbb{C}$. From short exact sequences
\begin{align*}
&0 \rightarrow \mathcal{O}_X(R_i) \rightarrow \mathcal{O}_X(R_i+R_j) \rightarrow \mathcal{O}_{R_j}(R_i+R_j) \rightarrow 0, &\\ 
&0 \rightarrow \mathcal{O}_X(R_j) \rightarrow \mathcal{O}_X(R_i+R_j) \rightarrow \mathcal{O}_{R_i}(R_i+R_j) \simeq \mathcal{O}_{R_i}(R_i) \rightarrow 0 & 
\end{align*}
we deduce that 
$$
\textrm{Ext}^1(\mathcal{O}_{R_j}(R_j+R_i), \mathcal{O}_X(R_k))  \simeq \textrm{Ext}^1(\mathcal{O}_{R_j}(R_j+R_i), \mathcal{O}_X)  \simeq \textrm{Ext}^1(\mathcal{O}_X(R_j+R_i), \mathcal{O}_X) = \mathbb{C}.
$$
It follows that $\textrm{Ext}^1(\mathcal{O}_{R_j-R_i}(R_j), \mathcal{O}_X(R_k)) = 0$.
\end{proof}

\begin{rem}
If $i \succeq j \succeq k$ the composition 
$$
\textrm{Hom}(\mathcal{O}_X(R_j), \mathcal{O}_X(R_k)) \otimes \textrm{Ext}^1(\mathcal{O}_X(R_i), \mathcal{O}_X(R_j)) \rightarrow \textrm{Ext}^1(\mathcal{O}_X(R_i), \mathcal{O}_X(R_k))
$$
does not have to be an isomorphism. Indeed, consider a surface $X$ obtained from its minimal model by three blow-ups such that $E_1^2 = -3$, $E_2^2 = -2$, $E_3^2 = -1$, $E_1 E_2= 0$, $E_1 E_3 = 1 $ and $E_2 E_3= 1$. Then $R_1= E_1+E_2+2E_3$, $R_2 = E_2+E_3$ and $R_3 = E_3$. Let $\bar{\alpha} \in \textrm{Ext}^1(\mathcal{O}_X(R_3), \mathcal{O}_X(R_2))$ and $\beta \in \textrm{Hom}(\mathcal{O}_X(R_2), \mathcal{O}_X(R_1))$ be non-zero elements. We have a short exact sequence
$$
0 \rightarrow \mathcal{O}_X(E_2+E_3) \xrightarrow{\beta} \mathcal{O}_X(E_1+E_2+2E_3) \rightarrow \mathcal{O}_{E_1+E_3}(E_1+E_2+2E_3) \rightarrow 0.
$$
As in the proof of the previous lemma $\beta \circ \bar{\alpha} = 0 $ if and only if $\textrm{Hom}(\mathcal{O}_X(E_3), \mathcal{O}_{E_1+E_3}(E_1+E_2+2E_3)) \neq 0$. We have $\textrm{Hom}(\mathcal{O}_X(E_3), \mathcal{O}_{E_1+E_3}(E_1+E_2+2E_3)) = H^0(X, \mathcal{O}_{E_1+E_3}(E_1+E_2+E_3))$. The latter sheaf fits into a short exact sequence
$$
0 \rightarrow \mathcal{O}_{E_3} \simeq \mathcal{O}_{E_3}(E_2+E_3) \rightarrow \mathcal{O}_{E_1+E_3}(E_1+E_2+E_3) \rightarrow \mathcal{O}_{E_1}(E_1+E_2+E_3) \simeq \mathcal{O}_{E_1}(-2) \rightarrow 0 
$$
from which it follows that $ H^0(X, \mathcal{O}_{E_1+E_3}(E_1+E_2+E_3)) = \mathbb{C}$.
\end{rem}

Thus, we know that between $\mathcal{O}_{R_i}(R_i)$ and $\mathcal{O}_{R_j}(R_j)$ there is either no arrow or two arrows, one in degree zero and one in degree one. Moreover, $\bar{\beta} \circ \alpha \neq 0 $ and $\beta \circ \alpha \neq 0$ for
\[
\xymatrix{\mathcal{O}_{R_i}(R_i) \ar@<1ex>[r]^{\alpha} \ar@<-1ex>[r]_{\bar{\alpha}} & \mathcal{O}_{R_k}(R_k) \ar@<1ex>[r]^{\beta} \ar@<-1ex>[r]_{\bar{\beta}} & \mathcal{O}_{R_j}(R_j),}
\]
where $\alpha, \beta$ are non-zero morphisms and $\bar{\alpha}, \bar{\beta}$ are non-zero elements of the first Ext groups.

It remains to understand what are the maps from $\mathcal{O}_{R_k}(R_k)$ to $\mathcal{N}_i$ and the compositions between them.

As $\mathcal{N}_i$ are torsion-free we know that \mbox{$\textrm{Hom}(\mathcal{O}_{R_k}(R_k), \mathcal{N}_i) = 0$.} From the short exact sequence
\begin{equation}\label{eqn1}
0 \rightarrow \mathcal{N}_i \rightarrow \mathcal{N}_i \otimes \mathcal{O}_X(R_k) \rightarrow \mathcal{O}_{R_k}(R_k)\rightarrow 0.
\end{equation}
we deduce that $\textrm{Ext}^1(\mathcal{O}_{R_k}(R_k), \mathcal{N}_i) \simeq \textrm{Hom}(\mathcal{O}_{R_k}(R_k), \mathcal{O}_{R_k}(R_k))  = \mathbb{C}$. Let $\zeta_k^i$ denote the non-zero element of the group $\textrm{Ext}^1(\mathcal{O}_{R_k}(R_k), \mathcal{N}_i)$.

Moreover, the diagram
\[
\xymatrix{& & 0 \ar[d] & 0 \ar[d] & \\
0 \ar[r] & \mathcal{N}_i \ar[r] \ar[d]^= & \mathcal{N}_i\otimes \mathcal{O}_X(R_j) \ar[r]  \ar[d] & \mathcal{O}_{R_j}(R_j) \ar[d] \ar[r] & 0  \\
0 \ar[r] & \mathcal{N}_i \ar[r] & \mathcal{N}_i \otimes \mathcal{O}_X(R_k) \ar[r] \ar[d] & \mathcal{O}_{R_k}(R_k) \ar[r] \ar[d] & 0 \\
 & & \mathcal{O}_{R_k-R_j}(R_k) \ar[r]^= \ar[d] & \mathcal{O}_{R_k-R_j}(R_k) \ar[d]\\
& & 0 & 0 } 
\]
shows that the composition
$$
\textrm{Hom}(\mathcal{O}_{R_j}(R_j), \mathcal{O}_{R_k}(R_k)) \otimes \textrm{Ext}^1(\mathcal{O}_{R_k}(R_k), \mathcal{N}_i) \rightarrow \textrm{Ext}^1(\mathcal{O}_{R_j}(R_j), \mathcal{N}_i)
$$
is an isomorphism.

To understand the composition 
$$
\textrm{Ext}^1(\mathcal{O}_{R_k}(R_k), \mathcal{N}_i) \otimes \textrm{Hom}(\mathcal{N}_i,\mathcal{N}_l) \rightarrow \textrm{Ext}^1(\mathcal{O}_{R_k}(R_k), \mathcal{N}_l)
$$
we apply the functor $\textrm{Hom}(-,\mathcal{N}_l)$ to the short exact sequence (\ref{eqn1}). It follows that for $\phi \in \textrm{Hom}(\mathcal{N}_i, \mathcal{N}_l)$ the composition $\phi \circ \zeta_k^i$ is zero if and only if $\phi$ factors through $\mathcal{N}_l(-R_k)$.

\subsection{DG quiver of $\langle \mathcal{O}_{R_n}(R_n)[-1], \ldots, \mathcal{O}_{R_1}(R_1)[-1], \mathcal{O}_X, \mathcal{N}_1, \ldots, \mathcal{N}_t \rangle$}

Now, we will present calculations allowing to determine the DG quiver of the collection $\langle \mathcal{O}_{R_n}(R_n)[-1], \ldots, \mathcal{O}_{R_1}(R_1)[-1], \mathcal{O}, \mathcal{N}_1, \ldots, \mathcal{N}_t \rangle$. Recall, that we work under the assumption that the collection $\left<\mathcal{O}_{X_0}, \mathcal{N}_1, \ldots, \mathcal{N}_t \right>$ on $X_0$ is strong.

To calculate the DG category of the collection $\langle \mathcal{O}_{R_n}(R_n)[-1], \ldots, \mathcal{O}_{R_1}(R_1)[-1], \mathcal{O}_X,$ $ \mathcal{N}_1, \ldots, \mathcal{N}_t \rangle$ we substitute some objects with universal coextensions.

\subsubsection{Tilting object}\label{sssec:tilt}

Note that if $2 \succeq 1$ then we have a unique nontrivial extension
$$
0 \rightarrow \mathcal{O}_{R_1}(R_1) \rightarrow \mathcal{O}_{R_1+R_2}(R_1 + R_2) \rightarrow \mathcal{O}_{R_2}(R_2) \rightarrow 0.
$$
Hence $\mathcal{O}_{R_1+R_2}(R_1+R_2)$ is the universal coextension of $\mathcal{O}_{R_1}(R_1)$ by $\mathcal{O}_{R_2}(R_2)$.

We will show that for $i_k \succeq \ldots \succeq i_1 \succeq s $ the universal coextension of $\mathcal{O}_{R_{i_1}+ \ldots + R_{i_k}}(R_{i_1} + \ldots + R_{i_k})$ by $\mathcal{O}_{R_s}(R_s)$ is $\mathcal{O}_{R_s + R_{i_1} + \ldots + R_{i_k}}(R_s+R_{i_1} + \ldots + R_{i_k})$. 

\begin{prop}\label{prop_tilt}
Let $\left< \mathcal{O}_{R_n}(R_n)[-1], \ldots, \mathcal{O}_{R_1}(R_1)[-1], \mathcal{O}_X, \mathcal{N}_1, \ldots, \mathcal{N}_t \right>$ be an exceptional collection on $X$ such that $\left< \mathcal{O}_{X_0}, \mathcal{N}_1, \ldots, \mathcal{N}_t\right>$ is a strong exceptional collection on $X_0$. Then 
$$
 \mathcal{O}_{S_n}(S_n)[-1]\, \oplus \, \mathcal{O}_{S_{n-1}}(S_{n-1})[-1]\oplus \ldots \oplus \mathcal{O}_{S_1}(S_1)[-1] \, \oplus \, \mathcal{O}_X\, \oplus \, \mathcal{N}_1 \oplus \ldots \oplus \mathcal{N}_t 
$$
is tilting on $X$, where $S_k$ are defined as
$$
S_k = \sum_{ j\succeq k} R_j.
$$
\end{prop}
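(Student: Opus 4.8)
The plan is to realise the statement as an instance of the Hille--Perling construction \cite{bib_HP2}. The collection $\langle \mathcal{O}_{R_n}(R_n)[-1], \ldots, \mathcal{O}_{R_1}(R_1)[-1], \mathcal{O}_X, \mathcal{N}_1, \ldots, \mathcal{N}_t\rangle$ is full, being a mutation of a full augmented collection (Proposition \ref{prop_canonical_form} together with \cite{bib_O}), and it is almost strong: by the computations of the previous subsection the only groups $\textrm{Ext}^i(\mathcal{E}_j,\mathcal{E}_k)$ with $i\neq 0$ are the one-dimensional spaces $\textrm{Ext}^1(\mathcal{O}_{R_a}(R_a),\mathcal{O}_{R_b}(R_b))$ for $a\succeq b$, the shift $[-1]$ having turned each $\textrm{Ext}^1(\mathcal{O}_{R_k}(R_k),\mathcal{N}_i)$ into a degree-zero map. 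Hence the tilting object is obtained by coextending each receiver $\mathcal{O}_{R_b}(R_b)[-1]$ by the earlier objects $\mathcal{O}_{R_a}(R_a)[-1]$, $a\succeq b$, and leaving $\mathcal{O}_X,\mathcal{N}_1,\ldots,\mathcal{N}_t$ untouched. Everything then reduces to identifying these coextensions as $\mathcal{O}_{S_b}(S_b)[-1]$ and to checking generation.

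First I would prove the identification by induction, following the recursion stated before the proposition. Since all the relevant $\textrm{Ext}^1$ are one-dimensional, I would add one curve at a time in an order refining $\succeq$: starting from $\mathcal{O}_{R_k}(R_k)$ and having reached the partial sum $\mathcal{O}_S(S)$, the next step is the unique nontrivial extension $0\to\mathcal{O}_S(S)\to\mathcal{O}_{S+R_j}(S+R_j)\to\mathcal{O}_{R_j}(R_j)\to0$, whose existence and shape follow from $S\cdot R_j=R_j^2=-1$, which identifies $\mathcal{O}_{S+R_j}(S+R_j)|_{R_j}\cong\mathcal{O}_{R_j}(R_j)$, and from $\textrm{Ext}^1(\mathcal{O}_{R_j}(R_j),\mathcal{O}_S(S))=\mathbb{C}$. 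Carrying this out for all $j\succeq k$ with $j\neq k$ yields $\mathcal{O}_{S_k}(S_k)$ with $S_k=\sum_{j\succeq k}R_j$. The substance of this step is to check that $\mathcal{O}_{S_k}(S_k)$ is the genuine \emph{universal} coextension, i.e. that $\textrm{Ext}^1(\mathcal{O}_{R_j}(R_j),\mathcal{O}_{S_k}(S_k))=0$ for every $j\succeq k$; this is exactly where Lemma \ref{lem_composition}, guaranteeing that the pertinent compositions are isomorphisms, is used to show that each successive extension class is the universal one and that the Ext$^1$ is completely absorbed.

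Generation is then immediate: each $\mathcal{O}_{S_k}(S_k)$ sits in distinguished triangles built from the original objects, so passing to coextensions does not change the generated subcategory, and the canonical collection is full, whence $\langle T\rangle=D^b(X)$. Although the tilting property now follows from \cite{bib_HP2}, I would also record the direct verification, since it is short and transparent. Using $0\to\mathcal{O}_X\to\mathcal{O}_X(S_k)\to\mathcal{O}_{S_k}(S_k)\to0$ together with $H^\bullet(\mathcal{O}_X(-S_k))=0$ and $R\pi_*\mathcal{O}_X(S_k)=\mathcal{O}_{X_0}$ (so that $H^\bullet(\mathcal{O}_{S_k}(S_k))=0$), the same dévissage as in Lemma \ref{lem_composition} gives $\textrm{Ext}^\bullet(\mathcal{O}_{S_k}(S_k),\mathcal{O}_{S_l}(S_l))\cong H^\bullet(X,\mathcal{O}_X(S_l-S_k))$, concentrated in degree $0$. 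The generalisation of (\ref{eqn1}) obtained from $\mathcal{N}_i|_{S_k}\cong\mathcal{O}_{S_k}$ shows $\textrm{Ext}^\bullet(\mathcal{O}_{S_k}(S_k),\mathcal{N}_i)$ is concentrated in degree $1$, while $\textrm{Ext}^\bullet(\mathcal{N}_i,\mathcal{O}_{S_k}(S_k))=0=\textrm{Ext}^\bullet(\mathcal{O}_X,\mathcal{O}_{S_k}(S_k))$; after the shift $[-1]$ each surviving contribution lands in degree zero. Finally the $\mathcal{N}$-block is strong because the $\mathcal{N}_i$ are pullbacks and $R\pi_*\mathcal{O}_X=\mathcal{O}_{X_0}$.

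The hard part will be the universality claim of the second paragraph: that the concretely constructed extension $\mathcal{O}_{S_k}(S_k)$ coincides with the abstract universal coextension of $\mathcal{O}_{R_k}(R_k)$ by the whole family $\bigoplus_{j\succeq k,\,j\neq k}\mathcal{O}_{R_j}(R_j)$, rather than being merely \emph{some} iterated extension of them. One must verify that the canonical evaluation map exhausts the spaces $\textrm{Ext}^1(\mathcal{O}_{R_j}(R_j),\mathcal{O}_{R_k}(R_k))$ compatibly for all $j$ at once, so that $\textrm{Ext}^1(\mathcal{O}_{R_j}(R_j),\mathcal{O}_{S_k}(S_k))$ vanishes simultaneously; controlling the order-dependence of the one-curve-at-a-time construction is precisely what forces one to use the composition law of Lemma \ref{lem_composition} and the Remark showing that the companion composition can degenerate.
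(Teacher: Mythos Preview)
Your proposal is correct and follows essentially the same route as the paper: identify each $\mathcal{O}_{S_k}(S_k)$ as the iterated universal coextension of $\mathcal{O}_{R_k}(R_k)$ by the $\mathcal{O}_{R_j}(R_j)$ with $j\succeq k$, then invoke the Hille--Perling construction from \cite{bib_HP2}. The paper packages the inductive check of universality into a separate statement (Lemma~\ref{lem_hominto}), whose base case is Lemma~\ref{lem_composition} and whose induction step is exactly your ``add one $R_j$ at a time and apply $\textrm{Hom}(\mathcal{O}_{R_i}(R_i),-)$ to the extension sequence''; it then concludes in one line by citing \cite{bib_HP2}, without your additional direct verification of the Ext-vanishing.
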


To prove Proposition \ref{prop_tilt} we shall need the following Lemma.

\begin{lem}\label{lem_hominto}
For $i \succeq k \succeq l$ we have
\begin{align*}
&\textrm{Hom}(\mathcal{O}_{R_i}(R_i), \mathcal{O}_{R_l + \ldots+R_k}(R_l + \ldots+R_k)) \simeq\\ &\textrm{Hom}(\mathcal{O}_{R_i}(R_i), \mathcal{O}_{R_{k+1}}(R_{k+1})) \otimes \textrm{Hom}(\mathcal{O}_{R_{k+1}}(R_{k+1}), \mathcal{O}_{R_l+ \ldots+R_k}(R_l + \ldots+R_k)) = \mathbb{C},&  \\
&\textrm{Hom}(\mathcal{O}_{R_i}(R_i), \mathcal{O}_{R_l+R_{l+1} + \ldots+R_k}(R_l+R_{l+1} + \ldots+R_k)) \simeq \\ &\textrm{Ext}^1(\mathcal{O}_{R_i}(R_i), \mathcal{O}_{R_{k+1}}(R_{k+1})) \otimes \textrm{Ext}^1(\mathcal{O}_{R_{k+1}}(R_{k+1}), \mathcal{O}_{R_l+ \ldots+R_k}(R_l + \ldots+R_k)) = \mathbb{C},&
\end{align*}
where the sum $R_l + \ldots +R_k$ is taken over all divisors $R_j$ such that $k \succeq j \succeq l$.
\end{lem}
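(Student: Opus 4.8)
The plan is to prove both isomorphisms by induction on the number of summands in $D := R_l + \ldots + R_k$ (the sum over all $j$ with $k \succeq j \succeq l$), peeling off one extremal component at a time. The basic device is the adjunction sequence for a decomposition $D = A + B$ of effective divisors into two groups of the $R_j$: twisting $0 \to \mathcal{O}_B(-A) \to \mathcal{O}_{A+B} \to \mathcal{O}_A \to 0$ by $\mathcal{O}_X(A+B)$ and using mutual orthogonality $R_i \cdot R_j = 0$ in the form $\mathcal{O}_{R_j}(R_j + R_{j'}) \simeq \mathcal{O}_{R_j}(R_j)$ (already exploited in Lemma \ref{lem_composition}) yields
$$
0 \to \mathcal{O}_B(B) \to \mathcal{O}_D(D) \to \mathcal{O}_A(A) \to 0.
$$
The base case $D = R_l$ is the two-object datum $\textrm{Hom}(\mathcal{O}_{R_i}(R_i), \mathcal{O}_{R_l}(R_l)) = \textrm{Ext}^1(\mathcal{O}_{R_i}(R_i), \mathcal{O}_{R_l}(R_l)) = \mathbb{C}$ for $i \succeq l$, which follows from the description of $\mathcal{C}$ as a mutation of $\mathcal{C}'$ together with the $\mathcal{O}_X(R_\bullet)$-data recalled before Lemma \ref{lem_composition}.

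For the first isomorphism I would peel off the minimal component $R_l$ as a subobject, i.e. take $B = R_l$, $A = D - R_l$, and apply $\textrm{Hom}(\mathcal{O}_{R_i}(R_i), -)$. In the long exact sequence the outer terms $\textrm{Hom}(\mathcal{O}_{R_i}(R_i), \mathcal{O}_{R_l}(R_l))$ and $\textrm{Ext}^1(\mathcal{O}_{R_i}(R_i), \mathcal{O}_{R_l}(R_l))$ are both $\mathbb{C}$, and $\textrm{Hom}(\mathcal{O}_{R_i}(R_i), \mathcal{O}_{D-R_l}(D-R_l)) = \mathbb{C}$ by induction. Everything then hinges on the connecting homomorphism $\delta$, which is cup product with the extension class, i.e. the composition
$$
\textrm{Ext}^1(\mathcal{O}_{D-R_l}(D-R_l), \mathcal{O}_{R_l}(R_l)) \otimes \textrm{Hom}(\mathcal{O}_{R_i}(R_i), \mathcal{O}_{D-R_l}(D-R_l)) \to \textrm{Ext}^1(\mathcal{O}_{R_i}(R_i), \mathcal{O}_{R_l}(R_l)).
$$
Showing this composition is nonzero, hence that $\delta$ is an isomorphism, is the crux: it is the $\mathcal{O}_{R_\bullet}(R_\bullet)$-analogue of Lemma \ref{lem_composition}, proved by the same auxiliary sequences and transported through the equivalence $\mathcal{C} \simeq \mathcal{C}'$. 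Once $\delta$ is an isomorphism, the map $\textrm{Hom}(\mathcal{O}_{R_i}(R_i), \mathcal{O}_D(D)) \to \textrm{Hom}(\mathcal{O}_{R_i}(R_i), \mathcal{O}_{D-R_l}(D-R_l))$ vanishes, so the inclusion of the extremal piece induces an isomorphism $\textrm{Hom}(\mathcal{O}_{R_i}(R_i), \mathcal{O}_{R_l}(R_l)) \xrightarrow{\sim} \textrm{Hom}(\mathcal{O}_{R_i}(R_i), \mathcal{O}_D(D))$, giving both the one-dimensionality and the factorization of the generator through that piece.

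For the second isomorphism the roles of the two ends are reversed: the maximal component $R_k$ can only occur as a quotient of $\mathcal{O}_D(D)$, since $\textrm{Ext}^1(\mathcal{O}_{R_j}(R_j), \mathcal{O}_{R_k}(R_k)) = 0$ for $j \prec k$ forbids gluing it as a subobject, and the generator found above lies in the image of the bottom sub $\mathcal{O}_{R_l}(R_l)$, hence projects to zero in the top quotient $\mathcal{O}_{R_k}(R_k)$. Thus the generator is detected from the top only through connecting maps, which is what yields the description by a product of two classes in $\textrm{Ext}^1$; I would obtain it by applying $\textrm{Hom}(\mathcal{O}_{R_i}(R_i), -)$ to $0 \to \mathcal{O}_{D-R_k}(D-R_k) \to \mathcal{O}_D(D) \to \mathcal{O}_{R_k}(R_k) \to 0$ and reducing once more to a nonvanishing-of-composition statement, now at the $\textrm{Ext}^1$-level. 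The main obstacle throughout is exactly this nonvanishing of the connecting maps; for the second isomorphism there is the extra bookkeeping that two degree-one classes produce the degree-zero generator, which becomes transparent once $\mathcal{O}_D(D)$ is written as the iterated cone (twisted complex) assembling the $\mathcal{O}_{R_j}(R_j)$, where the constituent objects carry the compensating shifts.
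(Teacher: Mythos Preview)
Your approach is correct and close to the paper's, with one organizational difference worth noting. The paper also proceeds by induction on the length of the chain $l \preceq \cdots \preceq k$, with base case $k=l$ reduced to Lemma~\ref{lem_composition}, but it uses a \emph{single} short exact sequence for the induction step, namely
\[
0 \to \mathcal{O}_{R_l+\cdots+R_{k-1}}(R_l+\cdots+R_{k-1}) \to \mathcal{O}_{R_l+\cdots+R_k}(R_l+\cdots+R_k) \to \mathcal{O}_{R_k}(R_k) \to 0,
\]
always peeling off the maximal element $R_k$ as a quotient and applying $\textrm{Hom}(\mathcal{O}_{R_i}(R_i),-)$. Both isomorphisms are then read off the same long exact sequence. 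You instead peel off the minimal element $R_l$ as a subobject for the first isomorphism and the maximal $R_k$ as a quotient for the second, which forces you to analyze two different connecting homomorphisms. This works, and your identification of each connecting map with a Yoneda composition of the type controlled by Lemma~\ref{lem_composition} (the ``$\textrm{Ext}^1\otimes\textrm{Hom}$'' direction, which \emph{is} always an isomorphism, as opposed to the direction in the subsequent Remark which can fail) is the right mechanism. The paper's choice is more economical precisely because a single sequence and a single connecting-map computation suffice for both claims; your splitting into two cases is not wrong, just redundant.
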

\begin{proof}
We proceed by induction. The basis case, for $k =l$ follows from Lemma \ref{lem_composition}. The induction step follows from applying the functor $\textrm{Hom}(\mathcal{O}_{R_i}(R_i), -)$ to the short exact sequence 
\begin{align}\label{eqt_extension}
0 \rightarrow \mathcal{O}_{R_l+\ldots+R_{k-1}}(R_l+\ldots+R_{k-1}) \rightarrow \mathcal{O}_{R_l+\ldots+R_{k}}(R_l+\ldots+R_{k}) \rightarrow \mathcal{O}_{R_k}(R_k) \rightarrow 0.
\end{align}
\end{proof}

\begin{proof}[Proof of Proposition \ref{prop_tilt}] 
From the above lemma and the short exact sequence (\ref{eqt_extension}) it follows that if $i \succeq k \succeq l$ the sheaf  $\mathcal{O}_{R_l+\ldots+R_{k}}(R_l+\ldots+R_{k})$ is the universal coextension of $\mathcal{O}_{R_l+\ldots+R_{k-1}}(R_l+\ldots+R_{k-1})$ by $\mathcal{O}_{R_i}(R_i)$. Hence, by the construction described in \cite{bib_HP2} 
the object
$$
 \mathcal{O}_{S_n}(S_n)[-1]\, \oplus \, \mathcal{O}_{S_{n-1}}(S_{n-1})[-1]\oplus \ldots \oplus \mathcal{O}_{S_1}(S_1)[-1] \, \oplus \, \mathcal{O}_X\, \oplus \, \mathcal{N}_1 \oplus \ldots \oplus \mathcal{N}_t 
$$
is tilting on $X$.
\end{proof}

Endomorphisms of the tilting object depend not only on the order of the divisors but also on the mutual position of the exceptional curves.

Consider the blow-up with the following mutual position of exceptional divisors.
\begin{center}
%% Creator: Inkscape 0.48.1, www.inkscape.org
%% PDF/EPS/PS + LaTeX output extension by Johan Engelen, 2010
%% Accompanies image file '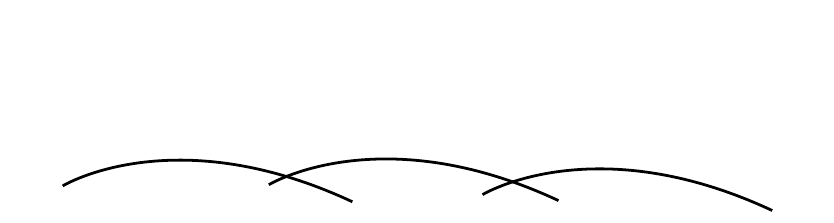' (pdf, eps, ps)
%%
%% To include the image in your LaTeX document, write
%%   \input{<filename>.pdf_tex}
%%  instead of
%%   \includegraphics{<filename>.pdf}
%% To scale the image, write
%%   \def\svgwidth{<desired width>}
%%   \input{<filename>.pdf_tex}
%%  instead of
%%   \includegraphics[width=<desired width>]{<filename>.pdf}
%%
%% Images with a different path to the parent latex file can
%% be accessed with the `import' package (which may need to be
%% installed) using
%%   \usepackage{import}
%% in the preamble, and then including the image with
%%   \import{<path to file>}{<filename>.pdf_tex}
%% Alternatively, one can specify
%%   \graphicspath{{<path to file>/}}
%% 
%% For more information, please see info/svg-inkscape on CTAN:
%%   http://tug.ctan.org/tex-archive/info/svg-inkscape

\begingroup
  \makeatletter
  \providecommand\color[2][]{%
    \errmessage{(Inkscape) Color is used for the text in Inkscape, but the package 'color.sty' is not loaded}
    \renewcommand\color[2][]{}%
  }
  \providecommand\transparent[1]{%
    \errmessage{(Inkscape) Transparency is used (non-zero) for the text in Inkscape, but the package 'transparent.sty' is not loaded}
    \renewcommand\transparent[1]{}%
  }
  \providecommand\rotatebox[2]{#2}
  \ifx\svgwidth\undefined
    \setlength{\unitlength}{240pt}
  \else
    \setlength{\unitlength}{\svgwidth}
  \fi
  \global\let\svgwidth\undefined
  \makeatother
  \begin{picture}(1,0.26666667)%
    \put(0,0){\includegraphics[width=\unitlength]{picture1.pdf}}%
    \put(0.73,0.08855256){\color[rgb]{0,0,0}\makebox(0,0)[lb]{\smash{$E_1$}}}%
    \put(0.18714284,0.09712373){\color[rgb]{0,0,0}\makebox(0,0)[lb]{\smash{$E_3$}}}%
    \put(0.44428574,0.09474275){\color[rgb]{0,0,0}\makebox(0,0)[lb]{\smash{$E_2$}}}%
  \end{picture}%
\endgroup

\end{center}
Then 
\begin{align*}
&E_1^2 = -2,& &E_2^2 = -2,& &E_3^2 = -1,&\\
&E_1E_2=1,& & E_1E_3 = 0,& & E_2E_3 = 1&
&R_1=E_1+E_2+E_3,& & R_2=E_2+E_3,&  &R_3 = E_3,&\\
\end{align*}
and the order is
\begin{align*}
& & &3 \succeq 2 \succeq 1.& & &
\end{align*}
The endomorphisms of $\mathcal{O}_{R_3}(R_3) \oplus \mathcal{O}_{R_2+R_3}(R_2+R_3) \oplus \mathcal{O}_{R_1+R_2+R_3}(R_1+R_2+R_3)$ are
\[
\xymatrix{\mathcal{O}_{R_3}(R_3) \ar@<1ex>[r]^{\alpha_3} & \mathcal{O}_{R_2+ R_3}(R_2+R_3)  \ar@<1ex>[r]^{\alpha_2}  \ar@<1ex>[l]^{\beta_3} &  \mathcal{O}_{R_1+R_2+ R_3}(R_1+R_2+R_3) \ar@<1ex>[l]^{\beta_2}}
\]
with
\begin{align*}
&\beta_3\circ \alpha_3 = 0,& & \alpha_3 \circ \beta_3 = \beta_2 \circ \alpha_2.&
\end{align*}

However, if the picture is
\begin{center}
%% Creator: Inkscape 0.48.1, www.inkscape.org
%% PDF/EPS/PS + LaTeX output extension by Johan Engelen, 2010
%% Accompanies image file '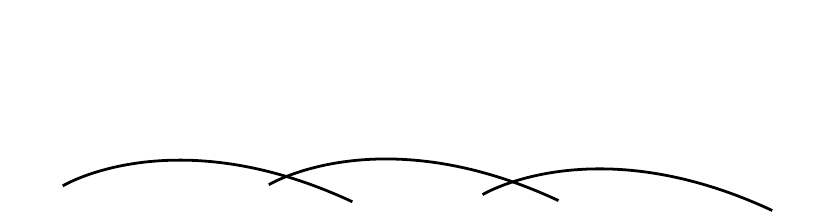' (pdf, eps, ps)
%%
%% To include the image in your LaTeX document, write
%%   \input{<filename>.pdf_tex}
%%  instead of
%%   \includegraphics{<filename>.pdf}
%% To scale the image, write
%%   \def\svgwidth{<desired width>}
%%   \input{<filename>.pdf_tex}
%%  instead of
%%   \includegraphics[width=<desired width>]{<filename>.pdf}
%%
%% Images with a different path to the parent latex file can
%% be accessed with the `import' package (which may need to be
%% installed) using
%%   \usepackage{import}
%% in the preamble, and then including the image with
%%   \import{<path to file>}{<filename>.pdf_tex}
%% Alternatively, one can specify
%%   \graphicspath{{<path to file>/}}
%% 
%% For more information, please see info/svg-inkscape on CTAN:
%%   http://tug.ctan.org/tex-archive/info/svg-inkscape

\begingroup
  \makeatletter
  \providecommand\color[2][]{%
    \errmessage{(Inkscape) Color is used for the text in Inkscape, but the package 'color.sty' is not loaded}
    \renewcommand\color[2][]{}%
  }
  \providecommand\transparent[1]{%
    \errmessage{(Inkscape) Transparency is used (non-zero) for the text in Inkscape, but the package 'transparent.sty' is not loaded}
    \renewcommand\transparent[1]{}%
  }
  \providecommand\rotatebox[2]{#2}
  \ifx\svgwidth\undefined
    \setlength{\unitlength}{240pt}
  \else
    \setlength{\unitlength}{\svgwidth}
  \fi
  \global\let\svgwidth\undefined
  \makeatother
  \begin{picture}(1,0.26666667)%
    \put(0,0){\includegraphics[width=\unitlength]{picture2.pdf}}%
    \put(0.73,0.08855256){\color[rgb]{0,0,0}\makebox(0,0)[lb]{\smash{$E_1$}}}%
    \put(0.18714284,0.09712373){\color[rgb]{0,0,0}\makebox(0,0)[lb]{\smash{$E_2$}}}%
    \put(0.44428574,0.09474275){\color[rgb]{0,0,0}\makebox(0,0)[lb]{\smash{$E_3$}}}%
  \end{picture}%
\endgroup

\end{center}
then
\begin{align*}
&E_1^2 = -3,& &E_2^2 = -2,& &E_3^2 = -1,&\\
&E_1E_2=0,& & E_1E_3 = 1,& & E_2E_3 = 1,&\\
&R_1=E_1+E_2+2E_3,& & R_2=E_2+E_3,&  &R_3 = E_3,&\\
\end{align*}
the order is still
\begin{align*}
& & &3 \succeq 2 \succeq 1.& & &
\end{align*}
and the endomorphisms of the tilting object are
\[
\xymatrix{\mathcal{O}_{R_3}(R_3) \ar@<1ex>[r]^(0.4){\alpha_3} & \mathcal{O}_{R_2+ R_3}(R_2+R_3)  \ar@<1ex>[r]^(0.4){\alpha_2}  \ar@<1ex>[l]^(0.6){\beta_3} &  \mathcal{O}_{R_1+R_2+ R_3}(R_1+R_2+R_3) \ar@<1ex>[l]^(0.6){\beta_2}}
\]
with
\begin{align*}
&\beta_3\circ \alpha_3 = 0,& &  \beta_2 \circ \alpha_2 = 0.&
\end{align*}

\subsubsection{Ext$^1(\mathcal{O}_{S_k}(S_k), \mathcal{N}_i)$}

\begin{lem}
Let $i_k \succeq i_{k-1} \succeq \ldots \succeq i_1$. Then 
$$
\textrm{Ext}^1(\mathcal{O}_{R_{i_1}+\ldots + R_{i_k}}(R_{i_1}+\ldots + R_{i_k}), \mathcal{N}_i) = \mathbb{C}^k
$$
and the remaining Ext groups are zero.
\end{lem}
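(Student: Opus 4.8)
The plan is to argue by induction on $k$, exactly mirroring the induction used in the proof of Lemma~\ref{lem_hominto}: peel off the top divisor $R_{i_k}$ and apply the contravariant functor $\textrm{Hom}(-,\mathcal{N}_i)$ to the resulting short exact sequence. Write $D_m = R_{i_1}+\ldots+R_{i_m}$ for $1\le m\le k$. The base case $k=1$ records that $\textrm{Hom}(\mathcal{O}_{R_{i_1}}(R_{i_1}),\mathcal{N}_i)=0$ and $\textrm{Ext}^1(\mathcal{O}_{R_{i_1}}(R_{i_1}),\mathcal{N}_i)=\mathbb{C}$, which is precisely what was extracted from the short exact sequence~(\ref{eqn1}) in the previous subsection, together with the vanishing $\textrm{Ext}^{\ge 2}(\mathcal{O}_{R_{i_1}}(R_{i_1}),\mathcal{N}_i)=0$ discussed below.

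For the inductive step, since $i_k\succeq i_{k-1}\succeq\ldots\succeq i_1$, the sheaf $\mathcal{O}_{D_k}(D_k)$ sits in the short exact sequence~(\ref{eqt_extension}) peeling off $R_{i_k}$:
$$
0\rightarrow \mathcal{O}_{D_{k-1}}(D_{k-1})\rightarrow \mathcal{O}_{D_k}(D_k)\rightarrow \mathcal{O}_{R_{i_k}}(R_{i_k})\rightarrow 0.
$$
Applying $\textrm{Hom}(-,\mathcal{N}_i)$ produces a long exact sequence in which, by the inductive hypothesis, $\mathcal{O}_{D_{k-1}}(D_{k-1})$ contributes $\mathbb{C}^{k-1}$ in degree $1$ and nothing elsewhere, while $\mathcal{O}_{R_{i_k}}(R_{i_k})$ contributes $\mathbb{C}$ in degree $1$ and nothing elsewhere. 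The degree-$0$ part forces $\textrm{Hom}(\mathcal{O}_{D_k}(D_k),\mathcal{N}_i)=0$; the degree-$1$ part becomes the short exact sequence $0\to\mathbb{C}\to \textrm{Ext}^1(\mathcal{O}_{D_k}(D_k),\mathcal{N}_i)\to\mathbb{C}^{k-1}\to 0$, giving $\textrm{Ext}^1=\mathbb{C}^{k}$; and the higher part forces $\textrm{Ext}^2(\mathcal{O}_{D_k}(D_k),\mathcal{N}_i)=0$. Groups $\textrm{Ext}^{\ge 3}$ vanish automatically because $X$ is a smooth projective surface. This closes the induction.

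The only point requiring genuine geometric input, and hence the step I expect to be the main obstacle, is the vanishing $\textrm{Ext}^2(\mathcal{O}_{R_{i_1}}(R_{i_1}),\mathcal{N}_i)=0$ needed in the base case, since the previous subsection computed only $\textrm{Hom}$ and $\textrm{Ext}^1$ explicitly. I would obtain it directly from~(\ref{eqn1}): applying $\textrm{Hom}(-,\mathcal{N}_i)$ places $\textrm{Ext}^2(\mathcal{O}_{R_{i_1}}(R_{i_1}),\mathcal{N}_i)$ between $\textrm{Ext}^1(\mathcal{N}_i,\mathcal{N}_i)$ and $\textrm{Ext}^2(\mathcal{N}_i\otimes\mathcal{O}_X(R_{i_1}),\mathcal{N}_i)$ in the long exact sequence. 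As the $\mathcal{N}_i$ are line bundles, the first group is $H^1(\mathcal{O}_X)=0$ because $X$ is rational, while Serre duality identifies the second with $H^2(\mathcal{O}_X(-R_{i_1}))^*$, which vanishes by the cohomology vanishing $H^k(\mathcal{O}_X(-R_{i_1}))=0$ recorded for the divisors $R_j$ in Section~\ref{sec_background}. Squeezed between two zero groups, $\textrm{Ext}^2(\mathcal{O}_{R_{i_1}}(R_{i_1}),\mathcal{N}_i)$ vanishes, completing the base case and hence the proof.
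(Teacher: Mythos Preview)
Your argument is correct and follows the same inductive strategy as the paper: peel off $R_{i_k}$ via the short exact sequence~(\ref{eqt_extension}) and apply $\textrm{Hom}(-,\mathcal{N}_i)$. The only difference is at the base step: the paper dispatches all $\textrm{Ext}^j(\mathcal{O}_{R_{i_k}}(R_{i_k}),\mathcal{N}_i)$ at once by invoking the equality $\textrm{Ext}^j(\mathcal{O}_{R_{i_k}}(R_{i_k}),\mathcal{N}_i)=\textrm{Ext}^j(\mathcal{O}_{E_{i_k}}(E_{i_k}),\mathcal{N}_i)$, thereby reducing to the exceptional $\mathbb{P}^1$ on the intermediate blow-up, whereas you extract the $\textrm{Ext}^2$-vanishing separately from sequence~(\ref{eqn1}) and the recorded vanishing $H^k(\mathcal{O}_X(-R_i))=0$. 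One small slip to fix: the group $\textrm{Ext}^2(\mathcal{N}_i\otimes\mathcal{O}_X(R_{i_1}),\mathcal{N}_i)$ is directly $H^2(X,\mathcal{O}_X(-R_{i_1}))$ because the $\mathcal{N}_i$ are line bundles --- no Serre duality is needed and no dual should appear --- though your conclusion that it vanishes is correct.
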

\begin{proof}
We proceed by induction. The short exact sequence
$$
0 \rightarrow \mathcal{O}_{R_{i_1}+\ldots+R_{i_{k-1}}}(R_{i_1}+\ldots+R_{i_{k-1}}) \rightarrow \mathcal{O}_{R_{i_1}+\ldots+R_i{_{k}}}(R_{i_1}+\ldots+R_{i_k}) \rightarrow \mathcal{O}_{R_{i_k}}(R_{i_k}) \rightarrow 0
$$
together with an equality 
$$
\textrm{Ext}^i(\mathcal{O}_{R_{i_k}}(R_{i_k}), \mathcal{N}_i) = \textrm{Ext}^i(\mathcal{O}_{E_{i_k}}(E_{i_k}), \mathcal{N}_i)
$$
completes the proof.
\end{proof}

If we apply the functor $\textrm{Hom}(\mathcal{O}_{S_k}(S_k),-)$ to the short exact sequence
$$
0 \rightarrow \mathcal{N}_i \rightarrow \mathcal{N}_i\otimes \mathcal{O}_X(S_k) \rightarrow \mathcal{O}_{S_k}(S_k) \rightarrow 0
$$
we get an isomorphism
\begin{align}
\label{eqn-Hom-and-Ext}
\textrm{Ext}^1(\mathcal{O}_{S_k}(S_k), \mathcal{N}_i) \simeq \textrm{Hom}(\mathcal{O}_{S_k}(S_k), \mathcal{O}_{S_k}(S_k)).
\end{align}
The identity morphism in the latter space corresponds to an element \mbox{$\zeta_k^i \in \textrm{Ext}^1(\mathcal{O}_{S_k}(S_k), \mathcal{N}_i)$.} 

The diagram
\[
\xymatrix{& & 0 \ar[d] & 0 \ar[d] &\\
0 \ar[r] &\mathcal{N}_i \ar[r] \ar[d]^= & \mathcal{N}_i \otimes \mathcal{O}_X(S_k) \ar[d] \ar[r] & \mathcal{O}_{S_k}(S_k)\ar[d]^{\iota} \ar[r] & 0 \\
0 \ar[r] & \mathcal{N}_i \ar[r] & \mathcal{N}_i\otimes \mathcal{O}_X(S_l) \ar[r] \ar[d] & \mathcal{O}_{S_l}(S_l) \ar[d]\ar[r]& 0\\
& & \mathcal{O}_{S_l-S_k}(S_l) \ar[r]^= \ar[d] & \mathcal{O}_{S_l-S_k}(S_l) \ar[d] &\\
& & 0 & 0 &}
\]
shows that for an inclusion $\iota\colon \mathcal{O}_{S_k}(S_k) \rightarrow \mathcal{O}_{S_l}(S_l)$ we have $\zeta_l^i \circ \iota = \zeta_k^i$.

The isomorphism (\ref{eqn-Hom-and-Ext}) allows also to calculate the Yoneda composition 
$$
\textrm{Hom}(\mathcal{N}_i, \mathcal{N}_k) \otimes \textrm{Ext}^1(\mathcal{O}_{S_k}(S_k), \mathcal{N}_i) \rightarrow \textrm{Ext}^1(\mathcal{O}_{S_k}(S_k), \mathcal{N}_k).
$$

Thus, if the collection $\left< \mathcal{O}_{X_0}, \mathcal{N}_1, \ldots, \mathcal{N}_t \right>$ on $X_0$ is strong we know the endomorphism algebra of the tilting object 
$$
 \mathcal{O}_{S_n}(S_n)[-1]\, \oplus \, \mathcal{O}_{S_{n-1}}(S_{n-1})[-1]\oplus \ldots \oplus \mathcal{O}_{S_1}(S_1)[-1] \, \oplus \, \mathcal{O}_X\, \oplus \, \mathcal{N}_1 \oplus \ldots \oplus \mathcal{N}_t. 
$$
Using twisted complexes one can then calculate the DG quiver of the collection $\left< \mathcal{O}_{R_n}(R_n)[-1], \ldots, \mathcal{O}_{R_1}(R_1)[-1],\right.$ $\left. \mathcal{O}_X, \mathcal{N}_1, \ldots, \mathcal{N}_t \right>$ and of any of its mutations.

\section{Canonical DG algebras of toric surfaces}\label{sec_toric}

\subsection{Toric surfaces}

We recall some information about toric surfaces. More details can be found for example in \cite{bib_F}.

A smooth projective toric surface $Y$ is determined by its fan, spanned by a collection of elements $\rho_1, \ldots, \rho_n $ in a lattice $N = \textrm{Hom}(\mathbb{C}^*, T) \cong \mathbb{Z}^2$, where \mbox{$T = (\mathbb{C}^*)^2$} is a two-dimensional torus. We enumerate $\rho_i$'s clockwise and consider their indexes, $i$'s, to be elements of $\mathbb{Z}/n\mathbb{Z}$. Then, for every $i \in \mathbb{Z}/n \mathbb{Z}$, vectors $\rho_i$ and $\rho_{i+1}$ form an oriented basis of $N$. Moreover, for every such pair there is no other $\rho_k$ lying in the rational polyhedral cone generated by $\rho_i$ and $\rho_{i+1}$ in $N_\mathbb{Q} = N\otimes \mathbb{Q}$.

There is a one-to-one correspondence between one-dimensional orbits of the $T$-action on $Y$ and the rays in the fan generated by $\rho_i$'s. For every $i$ we denote by $D_i$ the closure of this orbit. Then $D_i$'s are $T$-invariant divisors on $X$. Every $D_i$ is isomorphic to $\mathbb{P}^1$ and the intersection form is given by 
\begin{align*}
&D_i D_j =\left\{ \begin{array}{cl}a_i &\textrm{if } i = j,\\ 1 & \textrm{if } j \in \{i-1, i+1\} \\0 &\textrm{otherwise,} \end{array}\right.&
\end{align*}
where $a_i \in\mathbb{Z}$ are such that $\rho_{i-1} + a_i\rho_i + \rho_{i+1} = 0$. Conversely, the numbers $(a_1,\ldots, a_n)$ determine the toric surface $Y$. 

Divisors $D_i$ and $D_{i+1}$ intersect transversely in a $T$-fixed point $p_i$ corresponding to the cone spanned by vectors $\rho_i$ and $\rho_{i+1}$.

A surface $Y_1$ obtained from $Y$ by a blow-up of a torus-fixed point $p_i$ is again a toric surface. The fan of $Y_1$ is determined by vectors $\rho_1, \ldots, \rho_i, \rho_i + \rho_{i+1}, \rho_{i+1}, \ldots, \rho_n$. Moreover, every toric surface different from $\mathbb{P}^2$ can be obtained from some Hirzebruch surface $\mathbb{F}_a$ by a finite sequence of blow-ups of $T$-fixed points.

A canonical divisor of a toric surface is given by $K_Y = - \sum_{i=1}^n D_i$. The Picard group of $Y$ is \mbox{$\textrm{Pic}(Y) = \mathbb{Z}^{n-2}$.}

\subsection{Exceptional collections on toric surfaces}
The $a$-th Hirzebruch surface $\mathbb{F}_a$ has a fan with four vectors and we can assume that $w_1 = (1,0)$, $w_2 = (0,-1)$, \mbox{$w_3 = (-1,a)$} and $w_4=(0,1)$. The collection $\left< \mathcal{O}_{\mathbb{F}_a}, \mathcal{O}_{\mathbb{F}_a}(D_1), \mathcal{O}_{\mathbb{F}_a}(D_1+D_2), \mathcal{O}_{\mathbb{F}_a}(D_1+D_2+D_3) \right>$ is a full strong exceptional collection on $\mathbb{F}_a$. 

If $Y$ is obtained from $\mathbb{F}_a$ by a sequence of $T$-equivariant blow-ups then we can assume that the vectors $\rho_1, \ldots, \rho_n$ determining $Y$ are numbered in such a way that $\rho_n = w_4 = (0,1)$. Then the collection $\left< \mathcal{O}_Y, \mathcal{O}_Y(D_1), \mathcal{O}_Y(D_1+D_2), \ldots, \mathcal{O}_Y(D_1+\ldots+D_{n-1}) \right>$ on $Y$ is obtained by augmentation from the strong collection on $\mathbb{F}_a$ and hence it is full. The following lemma tells us that in fact the numeration of $T$-invariant divisors is not important. 
\begin{lem}[cf. Theorem 4.1 of \cite{bib_B}]\label{lem_mutations}
Let $\left< \mathcal{E}_1, \ldots, \mathcal{E}_n \right>$ be a full exceptional collection on a smooth projective variety $Z$ of dimension $m$. Then the $n$-fold mutation of $\mathcal{E}_n$ to the left, $L^n \mathcal{E}_n = \mathcal{E}_n\otimes \omega_Z[m-n]$, where $\omega_Z$ is the canonical line bundle on $Z$.
\end{lem}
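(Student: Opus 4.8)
The plan is to deduce the statement from the interaction between mutations and the Serre functor $S = (-)\otimes\omega_Z[m]$ on $D^b(Z)$, which is the substance of \cite{bib_B}. The only inputs needed are Serre duality on the smooth projective variety $Z$, which guarantees that $S$ has this shape, and the standard fact that a left mutation turns a full exceptional collection into a full exceptional collection spanning the same subcategory.

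First I would record what moving $\mathcal E_n$ all the way to the left computes. Write $F = L_{\mathcal E_1}L_{\mathcal E_2}\cdots L_{\mathcal E_{n-1}}\mathcal E_n$ for the object obtained by mutating $\mathcal E_n$ past $\mathcal E_{n-1},\ldots,\mathcal E_1$. Each step replaces an exceptional pair $\langle \mathcal E,\mathcal F\rangle$ by $\langle L_{\mathcal E}\mathcal F,\mathcal E\rangle$ and preserves the generated subcategory, so $\langle F,\mathcal E_1,\ldots,\mathcal E_{n-1}\rangle$ is again a full exceptional collection on $Z$. Exceptionality of this new collection forces $\textrm{Hom}^\bullet(\mathcal E_i,F)=0$ for $1\le i\le n-1$, so that $F$ lies in, and in fact generates, the right orthogonal $\langle \mathcal E_1,\ldots,\mathcal E_{n-1}\rangle^{\perp}$ inside $D^b(Z)$.

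Next I would identify this orthogonal with a Serre twist of $\mathcal E_n$. By Serre duality $\textrm{Hom}^\bullet(\mathcal E_i,\mathcal E_n\otimes\omega_Z[m]) \simeq \textrm{Hom}^\bullet(\mathcal E_n,\mathcal E_i)^{*}$, and the right-hand side vanishes for $i<n$ since $\langle \mathcal E_1,\ldots,\mathcal E_n\rangle$ is exceptional. Hence $S(\mathcal E_n)=\mathcal E_n\otimes\omega_Z[m]$ also lies in $\langle \mathcal E_1,\ldots,\mathcal E_{n-1}\rangle^{\perp}$; as $S$ is an autoequivalence it is exceptional, and a one-object right orthogonal is generated by a unique exceptional object up to shift. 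Therefore $F\simeq \mathcal E_n\otimes\omega_Z[m]$ up to a cohomological shift, i.e. $F$ is $\mathcal E_n$ twisted by $\omega_Z$ and shifted by some integer.

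It remains to pin down the shift, which I expect to be the only delicate point. I would determine it by bookkeeping the cohomological degrees through the iterated mutations: either inductively, using the mutation triangle $L_{\mathcal E}\mathcal F\to\mathcal E\otimes\textrm{Hom}^\bullet(\mathcal E,\mathcal F)\to\mathcal F$ to follow how the grading moves at each step, or by comparing a single nonvanishing group, computing $\textrm{Hom}^\bullet(F,\mathcal E_{n-1})$ from the triangles and matching its top nonzero degree against $\textrm{Hom}^\bullet(\mathcal E_{n-1},\mathcal E_n)^{*}$ through Serre duality. With the $n$-fold count and the mutation normalization of \cite{bib_B}, this fixes the shift and gives $L^n\mathcal E_n=\mathcal E_n\otimes\omega_Z[m-n]$. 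The hard part is exactly this degree accounting: the identification of the object up to shift is formal, whereas extracting the precise exponent $m-n$ requires tracking the shifts contributed by all the mutation triangles and reconciling them with the convention recorded in the statement.
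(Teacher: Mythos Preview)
The paper offers no proof of this lemma: the bracketed ``cf.\ Theorem~4.1 of \cite{bib_B}'' is the entire argument, and the text moves directly to applying the statement. So there is nothing to compare against except Bondal's theorem itself, and your outline is exactly the standard proof of that result: the Serre functor $S=(-)\otimes\omega_Z[m]$ sends $\mathcal{E}_n$ into $\langle\mathcal{E}_1,\ldots,\mathcal{E}_{n-1}\rangle^\perp$ by Serre duality, the iterated left mutation does the same by construction, and since that orthogonal is generated by a single exceptional object the two agree up to shift.

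You are right that the shift is the only subtle point, and a word of caution is in order. Moving $\mathcal{E}_n$ past $\mathcal{E}_{n-1},\ldots,\mathcal{E}_1$ is $n-1$ mutations, not $n$, so the notation ``$L^n$'' in the statement is already informal; and with the mutation triangle as normalized in this paper, a direct check on $\langle\mathcal{O},\mathcal{O}(1)\rangle$ on $\mathbb{P}^1$ gives $L_{\mathcal{O}}\mathcal{O}(1)=\mathcal{O}(-1)$, whereas the stated formula predicts $\mathcal{O}(-1)[-1]$. This off-by-one is irrelevant for the paper's application---the shift is discarded immediately after twisting by $\mathcal{O}_Y(D_n)$---but if you actually carry out the degree accounting you propose, verify it against a small example rather than trusting the exponent $m-n$ as written.
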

Let $\sigma_1 = \left< \mathcal{O}_Y, \mathcal{O}_Y(D_1), \mathcal{O}_Y(D_1+D_2), \ldots, \mathcal{O}_Y(D_1+\ldots+D_{n-1}) \right>$ be a full exceptional collection on $Y$. Then, by the above lemma 
$$
L^n\, \mathcal{O}_Y(D_1+\ldots+D_{n-1}) = \mathcal{O}_Y(-D_n)[2-n].
$$
Hence, $\sigma_1$ can be mutated to a collection 
$$
\left<\mathcal{O}_Y(-D_{n})[2-n],\mathcal{O}_Y, \mathcal{O}_Y(D_1), \mathcal{O}_Y(D_1+D_2), \ldots, \mathcal{O}_Y(D_1+\ldots+D_{n-2}) \right>
$$
which, in turn, after a shift and a twist by $\mathcal{O}_Y(D_n)$ is equivalent to the collection 
$$
\sigma_n = \left<\mathcal{O}_Y, \mathcal{O}_Y(D_n), \mathcal{O}_Y(D_n+D_1), \ldots, \mathcal{O}_Y(D_n+D_1+\ldots+D_{n-2}) \right>.
$$
One can repeat this operation and obtain full exceptional collections 
$$
\sigma_i = \left< \mathcal{O}_Y, \mathcal{O}_Y(D_i), \ldots, \mathcal{O}_Y(D_i+ \ldots+D_{i+n-2}) \right>
$$
for any $i\in \mathbb{Z}/n$.

\subsection{Canonical DG algebra of a toric surface}

Let $Z = \textrm{Tot}\, \omega_Y$ be the total space of the canonical bundle on $Y$ and let $p\colon Z \rightarrow Y$ denote the canonical projection. As the vector bundle $\mathcal{E} = \mathcal{O}_Y \oplus \mathcal{O}_Y(D_1) \oplus \ldots \oplus \mathcal{O}_Y(D_1+\ldots+D_{n-1})$ is a generator of $D^b(Y)$, we know that $p^*(\mathcal{E})$ is a generator of $D^b(Z)$. Moreover,
\begin{align*}
&\textrm{Hom}_Z(p^* (\mathcal{E}), p^*(\mathcal{E})) = \textrm{Hom}_Y(\mathcal{E}, p_* p^*(\mathcal{E})) = \\
&=\textrm{Hom}_Y(\mathcal{E}, \mathcal{E} \otimes p_*(\mathcal{O}_Z)) = \bigoplus_{n\geq 0} \textrm{Hom}_Y(\mathcal{E}, \mathcal{E} \otimes \mathcal{O}_Y(-n K_Y)).
\end{align*}

On $Y$ we can consider an infinite sequence $(A_k)_{k=0}^{\infty}$ of line bundles 
$$
A_{sn+r} = \mathcal{O}(s K_Y + D_1 + \ldots+D_r), \quad \textrm{for } 0\leq r < n.
$$
Denote by $\mathcal{A}_Y = \bigoplus A_k$ the sum of all elements in this sequence. It is proved in \cite{bib_S} that the DG enhancement of $\textrm{Hom}^\bullet(\mathcal{A}_Y, \mathcal{A}_Y)$ can be calculated via the \v{C}ech enhancement. It follows that the DG enhancement of $\textrm{Hom}_Z(p^*(\mathcal{E}), p^*(\mathcal{E}))$ is the same as the DG enhancement of $\textrm{Hom}_Y(\mathcal{A}_Y, \mathcal{A}_Y)$.

The sequence $\left< \mathcal{O}_Y, \mathcal{O}_Y(D_1), \ldots, \mathcal{O}_Y(D_1+\ldots+D_{n-1}) \right>$ is an augmentation of a strong exceptional collection on a Hirzebruch surface and therefore the methods described in Section \ref{sec_DGquivers} allow to calculate the DG algebra of endomorphisms of $\bigoplus_{k=0}^{n-1} A_k$. Lemma \ref{lem_mutations} guarantees that up to shifts the remaining elements of the sequence $(A_k)$ are obtained by mutations from $A_0, \ldots, A_{n-1}$. Therefore, twisted complexes allow to calculate the DG endomorphism algebra of $\textrm{Hom}(\mathcal{A}_Y, \mathcal{A}_Y)$, \emph{the canonical DG algebra of }$Y$.

The composition provides a natural map
$$
\textrm{Hom}(A_{i_{k-1}},A_{i_k}) \otimes \ldots \otimes \textrm{Hom}(A_{i_1},A_{i_2}) \xrightarrow{\Psi_{i_1,\ldots,i_k}} \textrm{Hom}(A_{i_1},A_{i_k})
$$
and an analogous one for elements of $\textrm{Ext}^1(A_{i_1},A_{i_k})$. If there exists $K \in \mathbb{N}$ such that for any $i,j$ any element of $\textrm{Hom}(A_i,A_j)$ or $\textrm{Ext}^1(A_i,A_j)$ is in the image of some $\Psi_{i_1,\ldots,i_k}$ such that $i_{s+1} - i_s < K$ for all $s\in \{1,\ldots, k-1\}$ then the canonical DG algebra of $Y$ can be presented as a path algebra of a cyclic DG quiver with $K$ vertices.

If one can choose $K$ to be the number $n$ of $T$-invariant divisors of $Y$ then the DG quivers $Q_i$'s of exceptional collections $\sigma_i$ can be read from the canonical DG quiver $Q$ of $Y$
\begin{align*}
(Q_{i})_0 &= (Q_Y)_0,\\
(Q_{i})_1 &= (Q_Y)_1 \setminus \{a \in (Q_Y)_1 \, | \, t(a) > i-1 >h(a)  \}
\end{align*}
and the canonical DG quiver $Q$ is obtained by glueing of the DG quivers $Q_i$.

\begin{rem}
The canonical DG algebra of $\mathbb{F}_3$ cannot be presented as a path algebra of such a quiver, i.e. in this case $K > 4$. If, as before, we consider the fan of $\mathbb{F}_3$ with $w_1=(1,0)$, $w_2 = (0, -1)$, $w_3 = (-1, 3)$ and $w_4 = (0,1)$ then the map $\phi: \mathcal{O}_{\mathbb{F}_3}(D_1+D_2)\rightarrow \mathcal{O}_{\mathbb{F}_3}(2D_1+2D_2+2D_3+D_4)$ with zeroes along $2 D_2$ is not  a non-trivial composition of any maps between line bundles.
\end{rem}

\subsection{Examples} We conclude with some examples of canonical DG quivers of toric surfaces.

The canonical DG algebra of $\mathbb{F}_1$ is a path algebra of the quiver

\[
\xymatrix{& *+[F]{v_1} \ar@<1ex>[dr]|{a_1} \ar@<-1ex>[dr]|{a_2}& \\
*+[F]{v_4} \ar[ur]|{e} \ar@/^4pc/[rr]|{g} & & *+[F]{v_2} \ar@<2ex>[dl]|{b} \ar[dl]|{c_0} \ar@<-2ex>[dl]|{c_1} \\ 
& *+[F]{v_3} \ar@<1ex>[ul]|{d_1} \ar@<-1ex>[ul]|{d_2} \ar@/^1pc/[uu]|{f}  &}
\]
with relations
\begin{align*}
&c_0\, a_1 = c_1\, a_2, & & d_1 \,c_0 = d_2\, c_1, & &d_1\, b \,a_2 = d_2\, b \,a_1, &\\
&a_1 \,e \,d_2 = a_2 \,e\, d_1, & &a_1 \,f = g\, d_1,& &a_2 \,f = g\, d_2, &\\
&b \,a_1\, e = c_1 \,g, & &b\, a_2\, e = c_0 \,g, & & f\, c_0 = e \,d_2\, b,&\\
&f \,c_1 = e\, d_1\, b.& 
\end{align*}

The canonical DG algebra of $\mathbb{F}_2$, with intersection numbers $(0,2,0,-2)$, is a path algebra of the following DG quiver
\[
\xymatrix{& & *+[F]{v_4} \ar@<1ex>[ddrr]|{a_1} \ar@<-1ex>[ddrr]|{a_2} & & \\
\\
*+[F]{v_3} \ar@<1ex>[uurr]|{e} \ar@<-1ex>[uurr]|{f} \ar@<1ex>@/^1pc/[rrrr]|{h_1} \ar@<-1ex>@/^1pc/[rrrr]|{h_2} & & & & *+[F]{v_1} \ar@<3ex>[ddll]|{b} \ar@<1ex>[ddll]|{c_0} \ar@<-1ex>[ddll]|{c_1} \ar@<-3ex>[ddll]|{c_2} \ar@<1ex>@/^1pc/[llll]|{g_1} \ar@<-1ex>@/^1pc/[llll]|{g_2} \\
\\
& & *+[F]{v_2} \ar@<1ex>[uull]|{d_1}\ar@<-1ex>[uull]|{d_2} \ar@<2ex>@/^8pc/[uuuu]|{j_2} \ar@/^8pc/[uuuu]|{j_1} & &}
\]
with
\begin{align*}
&\textrm{deg}(a_1) = 0,& &\textrm{deg}(a_2) = 0,& &\textrm{deg}(b) = 0,& &\textrm{deg}(c_0) = 0,&\\
&\textrm{deg}(c_1) = 0,& &\textrm{deg}(c_2) = 0,& &\textrm{deg}(d_1) = 0,& &\textrm{deg}(d_2) = 0,&\\
&\textrm{deg}(e) = 0,& &\textrm{deg}(f) =1,& &\textrm{deg}(g_1) = -1,& &\textrm{deg}(g_2) = -1,&\\
&\textrm{deg}(h_1) = 0,& &\textrm{deg}(h_2)=0,& &\textrm{deg}(j_1) = 0,& &\textrm{deg}(j_2) = 0,&
\end{align*}
\begin{align*}
&\partial(g_1)=d_2\,c_1\,-\,d_1\,c_0,& &\partial(g_2)=d_2\,c_2\,-\,d_1\,c_1,& &\partial(h_1)=a_1\,f,&\\
&\partial(h_2)=a_2\,f,& &\partial(j_1)=f\,d_1,& &\partial(j_2)=f\,d_2&
\end{align*}
and relations
\begin{align*}
&c_0\, a_1 = c_1\, a_2,& &c_1\,a_1 = c_2\, a_2,& &d_1\, b \, a_2 = d_2\, b\, a_1,& &c_1\, h_2 = c_0\, h_1 + b\, a_2 \, e,&\\
&c_2\, h_2 = c_1\,h_1 + b\, a_1\,e,& &a_1\,j_2 = a_2\, j_1,& &h_1\, d_2 = h_2\, d_1,& &a_1\,e \, d_2 = a_2\, e \, d_1,& \\
&a_1\, f\, d_2 = a_2\, f\, d_1, & & f\,d_1\, c_0 = f\, d_2\, c_1,& & f\,d_1\,c_1 = f\, d_2\, c_2,& &f\,g_1 = e\,d_2\, b,&\\
&f\, g_2 = e\, d_1\, b,& & j_1\,c_0 = j_2\, c_1,& &j_1\, c_1 = j_2 \,c_1,& & a_1\, j_1 = 0,&\\
&a_2 \,j_2 = 0,& & h_1\,d_1 =0,& &h_2\, d_2 = 0.&  
\end{align*}
If we blow up $\mathbb{F}_1$ in such a way that the obtained toric surface $Y_1$ has intersection numbers $(-1,-1,0,0,-1)$ then the canonical algebra of $Y_1$ is a path algebra of the following quiver
\[
\xymatrix{&& *+[F]{v_5} \ar@<1ex>@/^1pc/[drr]|{a} \ar@<-1ex>@/^1pc/[drr]|{b}  && \\
*+[F]{v_4} \ar@/^1pc/[urr]|{j} \ar@/^1pc/[rrrr]|{l}& & & & *+[F]{v_1} \ar@<1ex>[d]|{c} \ar@<-1ex>[d]|{d} \ar@/^1pc/[dllll]|(0.3){g} \\
*+[F]{v_3} \ar[u]|{f} \ar@/^1pc/[uurr]|(0.4){k} & & & & *+[F]{v_2}\ar@/^1pc/[llll]|{e} \ar@/^1pc/[ullll]|(0.7){h} }
\]
with relations
\begin{align*}
& g\, b = e\, d\, a, & &h\, d = f\, g,& &h\, c\, b = f\, e\, c\, a, & &k\, g = j\, h\, c,& \\
&k \, e \, d = j\, f\, e\, c,& &b\, k = l\, f,& &b\, j \, h = e\, j\, f\, e, & &l\, h = a\, k\, e, & \\
&l\, f\, e = b\, k\, e,  & &d\, l = c\, b\, j, & &b\, k = l\, f,&&d\, a\, k = c\, a\, j\, f,& \\
&g\, l = e\, c\, a\, j.&
\end{align*}
If we blow up $\mathbb{F}_1$ in another point, to obtain $Y_2$ with intersection numbers $(0,1,-1,-1,-2)$, then the canonical DG algebra is a path algebra of the following DG quiver: 

\[
\xymatrix{*+[F]{v_4} \ar[rrr]|{g}  \ar@/^5pc/[rrrrrd]|{m}  \ar@/^2pc/[rrrdd]|{r} & & & *+[F]{v_5} \ar@<-1ex>@/^2pc/[dd]|{l_1}  \ar@<1ex>@/^2pc/[dd]|{l_2} \ar@<1ex>@/^1pc/[drr]|{k_1}   \ar@<-1ex>@/^1pc/[drr]|{k_2} & & \\
& & & & & *+[F]{v_1} \ar@<1ex>@/^1pc/[dll]|{a} \ar@<-1ex>@/^1pc/[dll]|{b} \\
*+[F]{v_3}\ar[uu]|{f} \ar@/^2pc/[uurrr]|{h} & & & *+[F]{v_2} \ar@<-1ex>[uu]|{s_1} \ar@<1ex>[uu]|{s_2} \ar@<2ex>@/^1pc/[lll]|{c} \ar@/^1pc/[lll]|{d} \ar@<-2ex>@/^1pc/[lll]|{e} \ar@/^2pc/[uulll]|{i}& &}
\]
with
\begin{align*}
&\textrm{deg}(a) = 0,& &\textrm{deg}(b)=0,& &\textrm{deg}(c)=0,& &\textrm{deg}(d)=0,&\\
&\textrm{deg}(e)=0,& &\textrm{deg}(f)=0,& &\textrm{deg}(g)=0,& &\textrm{deg}(h)=0,&\\
&\textrm{deg}(i)=0,& &\textrm{deg}(k_1)=1,& &\textrm{deg}(k_2)=0,& &\textrm{deg}(l_1)=0,&\\
&\textrm{deg}(l_2)=0,& &\textrm{deg}(m)=0,& &\textrm{deg}(r)=0,& &\textrm{deg}(s_1)=-1,&\\
&\textrm{deg}(s_2)=-1,&
\end{align*}
\begin{align*}
&\partial(l_1)=b\,k_1,& &\partial(l_2)=b\,k_2,& &\partial(m)=k_1\,g,& \\
&\partial(r)=k_1\, h,&&\partial(s_1)=h\, e \, - \, g\, i,& &\partial(s_2)=h\,d \, - \, g \, f\, e&
\end{align*}
and relations
\begin{align*}
&e\, b = d\, a,& &i\, b = f\, e\, a,& &g\, f\, c\, a= h\, c\, b,& &e\, l_1 = c\, b\, k_2+d\, l_2,&\\
 &i\, l_1 = f\, e\, l_2 + f\, c\, a \, k_2,& &l_1\, g = b\, m,& &l_2\, g = a\, m,& &a\, r = l_2\, h,&\\
 &a\, m\, f = b\, r,& &b\, k_2\, h = a \, k_2\, g\, f,& &b\, k_1\, h = a\, k_1\, g\, f,& &k_1\, s_1 = k_2\, h\, c,&\\
 &k_1\, s_2 = k_2\, g\, f\, c.&
\end{align*}

\textbf{Acknowledgements}
I would like to thank Alexey Bondal, Alexander Efimov, Alexander Kuznetsov and Markus Perling for useful discussions and suggestions.

\end{document}